\documentclass[12pt]{amsart}

\usepackage{amsfonts}
\usepackage{amssymb}
\usepackage{amsthm}
\usepackage{amsmath}
\usepackage{enumerate}
\usepackage{mathtools}
\usepackage{graphicx}
\usepackage{color}

\newtheorem{theorem}{Theorem}[section]
\newtheorem{lemma}[theorem]{Lemma}

\newtheorem{claim}[theorem]{Claim}
\newtheorem{proposition}[theorem]{Proposition}

\newtheorem*{theorem*}{Theorem}

\newtheorem{corollary}[theorem]{Corollary}
\newtheorem{definition}[theorem]{Definition}
\numberwithin{equation}{section}

\newcommand{\KK}{\mathcal{K}}

\newcommand{\MM}{\mathcal{M}}

\newcommand{\RR}{\mathbb{R}}

\newcommand{\eps}{\varepsilon}

\newcommand{\spt}[1]{\operatorname{spt}\left({#1}\right)}

\begin{document}
\title[Free boundary flow through cylindrical singularities]{Free boundary flow through cylindrical singularities}
\author{Yueheng Bao}\address{Department of Mathematics\\ University of Toronto\\Toronto, ON M5S 2E4\\ Canada}
\email{bao624@math.utoronto.ca}

\author{Robert Haslhofer}\address{Department of Mathematics\\ University of Toronto\\Toronto, ON M5S 2E4\\ Canada}
\email{roberth@math.toronto.edu}

\begin{abstract} We consider mean curvature flow with free boundary through cylindrical or half-cylindrical singularities, namely singularities of the types $\mathbb{R}^k\times S^{n-k}$, $\mathbb{R}^k_+\times S^{n-k}$ or $\mathbb{R}^k\times S^{n-k}_+$. Using the foundational results for free boundary Brakke flows by Edelen and the first author, and the recent classification of ancient asymptotically cylindrical flows by Bamler-Lai, we prove that all these singularities have a mean-convex neighborhood. Moreover, generalizing work of Hershkovits-White to the free boundary setting we show that the free boundary level set flow is nonfattening provided all singularities have a mean-convex neighborhood. We conclude that free boundary flow through singularities is well-posed as long as all singularities are of cylindrical or half-cylindrical type.

\end{abstract}
\date{\today}
\maketitle

\section{Introduction}

A family of hypersurfaces $M_t\subset\RR^{n+1}$ moves by mean curvature flow if the velocity at each point is given by the mean curvature vector. It is often of particular interest to consider hypersurfaces with free boundary, namely $M_t$ is now contained in some domain $\bar{\Omega}\subset\mathbb{R}^{n+1}$, satisfies the mean curvature flow equation in the interior $\Omega$, and meets $\partial \Omega$ perpendicularly. This free boundary flow has been studied quite extensively in the smooth setting, initiated by Stahl \cite{Stahl2,Stahl1} and Buckland \cite{Buckland}, and  pursued further by many others. However, to properly describe flows through singularities, which is most crucial for applications, one needs weak solutions. Suitable such notions are the free boundary level set flow, considered first by Giga and Sato \cite{GigaSato,Sato}, and the free boundary Brakke flow, introduced first by Mizuno-Tonegawa \cite{MizunoTonegawa}, and developed comprehensively by Edelen \cite{Edelen}.
Using this framework, Edelen, Ivaki, Zhu and the second author obtained a precise regularity and structure theory in the mean-convex case \cite{EHIZ}. Moreover, the second author also constructed a free boundary flow with surgery \cite{Haslhofer_fb_surgery}, and applied this to construct multiple free boundary minimal disks in convex domains in joint work with Ketover \cite{HaslhoferKetover} (see also the recent improvement \cite{SSWZ}).\\

However, the above results for free boundary flows correspond to what has been known already since 20 years for flows without boundary by the fundamental work of White \cite{White_size,White_nature} and Huisken-Sinestrari \cite{HuiskenSinestrari}, see also \cite{BrendleHuisken,HaslhoferKleiner,HaslhoferKleiner2}, so in a sense the theory of free boundary flows is still lacking far behind. Indeed, flows without boundary witnessed many exciting more recent breakthroughs, most notably the proof of the uniqueness conjecture for flows through mean-convex singularities by Hershkovits-White \cite{HershkovitsWhite}, the proof of the mean-convex neighborhood conjecture by Bamler-Lai \cite{BamlerLai2,BamlerLai1} (see also \cite{CHH,CHHW} for the earlier proof in the case of neck-singularities), the proof of the multiplicity-one conjecture by Bamler-Kleiner \cite{BamlerKleiner}, and the proof of the genericity conjecture by Chodosh-Choi-Mantoulidis-Schulze \cite{CCMS_generic1,CCS_generic2,CCMS_revisit}.\\

In the present paper, we prove free boundary versions of the former two conjectures (we plan to address the latter two elsewhere). Namely, we prove that the free boundary level set flow is nonfattening provided all singularities have a mean-convex neighborhood, and we prove that all cylindrical and half-cylindrical singularities indeed have such a mean-convex neighborhood, and in fact a canonical neighborhood. Crucial ingredients for this are the classification of ancient $k$-cylindrical flows by Bamler-Lai \cite{BamlerLai2,BamlerLai1}, as well as the recent foundational results for free boundary flows by the first author \cite{Bao}.
We conclude that free boundary flow through singularities is well-posed as long as all singularities are of cylindrical or half-cylindrical type.\\

\bigskip

\subsection*{Main results} 

Throughout this paper, we fix a smooth open convex domain $\Omega\subset\mathbb{R}^{n+1}$. Recall that a smooth hypersurface $M\subset\bar{\Omega}$ has free boundary if it meets $\partial \Omega$ orthogonally at its boundary $\partial M= M\cap \partial \Omega$. We will also often consider the two regions $K,K'\subset\bar{\Omega}$ enclosed by $M$, such that
\begin{equation}\label{encl_regions}
K\cup K'=\bar{\Omega}\quad\mathrm{and}\quad K\cap K'=M.
\end{equation}

To state our results let us recall some background from \cite{Bao}. Given any smooth compact free boundary hypersurface $M\subset\bar{\Omega}$, we can evolve it through singularities by the free boundary level set flow $F_t(M)$, the outer free boundary flow $M_t$, and the inner free boundary flow $M'_t$. As introduced first in \cite{GigaSato,Sato}, the free boundary level set flow of any closed set $X\subset\bar{\Omega}$ is simply the maximal family of closed sets $F_t(X)$ with $F_0(X)=X$ that satisfies the avoidance principle when compared with any smooth compact free boundary flow. To describe the other two flows, we start with the enclosed regions $K,K'$ as defined in \eqref{encl_regions} and consider the spacetime tracks of their free boundary level set flows,
\begin{equation}
\mathcal{K}=\bigcup_{t\geq 0} F_t(K)\times \{t\}\quad \mathrm{and}\quad \mathcal{K'}=\bigcup_{t\geq 0} F_t(K')\times \{t\}.
\end{equation} 
Working with the relative spacetime boundary $\partial \mathcal{X}=\mathcal{X}\setminus \mathrm{Int}_{\bar{\Omega}\times \mathbb{R}}(\mathcal{X})$, where $\mathrm{Int}_{\bar{\Omega}\times \mathbb{R}}(\mathcal{X})$ denotes the interior of $\mathcal{X}$ viewed as a subset of the topological space $\bar{\Omega}\times\mathbb{R}$, the outer and inner free boundary flow have then be defined in \cite{Bao}, generalizing the notions from \cite{HershkovitsWhite}, as
\begin{equation}
M_t= \{ x\in \bar{\Omega} \, | \, (x,t)\in \partial \mathcal{K}\}\quad \mathrm{and}\quad M_t'= \{ x\in \bar{\Omega} \, | \, (x,t)\in \partial \mathcal{K}'\}.
\end{equation} 
For convenience,  we will state the following definitions and results for the free boundary outer flow, but swapping the roles of $K$ and $K'$ one sees that they hold equally well for the free boundary inner flow.\\

If $x_0\in\Omega$ is an interior point, then similarly as in \cite{CHH} we say that $\mathcal{K}$ has a cylindrical singularity at $X_0=(x_0,t_0)$ if for some $\lambda_i\to \infty$ the parabolically rescaled flows $\mathcal{D}_{\lambda_i}(\mathcal{K}-X_0)$ converge locally smoothly with multiplicity-one up to rotation to a round shrinking solid cylinder
\begin{equation}
\left\{ \mathbb{R}^k \times \bar{B}^{n+1-k}(\sqrt{2(n-k)|t|})  \right\}_{t<0}.
\end{equation}
By convention this includes the case of spherical singularities for $k=0$.\\

If $x_0 \in \partial \Omega$ is a boundary point, then the rescaled domains $\lambda\cdot(\Omega-x_0)$ for $\lambda\to \infty$ converge up to rotation to the half-space $\mathbb{R}^{n+1}_{+}=\mathbb{R}^{n+1}\cap \{ x_1\geq 0\}$. Also note that $\mathbb{R}^{n+1}_{+}$ admits two types of free boundary half-cylinders with axis orthogonal or parallel to $\partial \mathbb{R}^{n+1}_{+}$, respectively. Motivated by this, we define half-cylindrical singularities as follows:

\begin{definition}[half-cylindrical singularities]\label{def_half_cyl} The outer free boundary flow has a half-cylindrical singularity at $X_0=(x_0,t_0)\in \partial\Omega\times \mathbb{R}_+$ if for some $\lambda_i\to \infty$ the parabolically rescaled flows $\mathcal{D}_{\lambda_i}(\mathcal{K}-X_0)$ converge locally smoothly with multiplicity-one up to rotation to either
\begin{equation}\label{sol_cyl1}
\left\{ \mathbb{R}^k_{+} \times \bar{B}^{n+1-k}(\sqrt{2(n-k)|t|}) \right\}_{t<0},
\end{equation}
or
\begin{equation}\label{sol_cyl2}
 \left\{  \bar{B}_{+}^{n+1-k}(\sqrt{2(n-k)|t|})\times\mathbb{R}^k \right\}_{t<0}.
\end{equation}
\end{definition}

Our first main theorem shows that all cylindrical and half-cylindrical singularities have a (two-sided) mean-convex neighborhood:

\begin{theorem}[mean-convex neighborhoods]\label{thm_mcn}If the outer free boundary flow $\{M_t\}_{t\geq 0}$ has a cylindrical or half-cylindrical singularity at $X_0=(x_0,t_0)\in \bar{\Omega}\times \mathbb{R}_+$, then there exists a constant $\delta=\delta(X_0)>0$, such that $K_t=F_t(K)$ for all $t_0-\delta^2< t_1 < t_2< t_0+\delta^2$ satisfies
\begin{equation}
K_{t_2}\cap B(x_0,\delta) \subseteq K_{t_1}\setminus M_{t_1}.
\end{equation}
\end{theorem}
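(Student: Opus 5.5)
We follow the strategy of Choi-Haslhofer-Hershkovits \cite{CHH} and Bamler-Lai \cite{BamlerLai1}, adapted to the free boundary setting via the results of \cite{Bao}. The argument splits into the interior case $x_0\in\Omega$ and the boundary case $x_0\in\partial\Omega$.

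\emph{Interior case.} Here, after shrinking the neighborhood, everything is local and away from $\partial\Omega$. The flow is then an ordinary level set flow, and the unit-regular integral Brakke flow bounding $\mathcal{K}$ is an ordinary Brakke flow there. So the conclusion follows directly from the mean-convex neighborhood theorem of Bamler-Lai.

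\emph{Boundary case, first step.} We argue by contradiction. If no such $\delta$ exists, we take the outer Brakke flow of \cite{Bao} associated with $\mathcal{K}$. We then pick a sequence of points $X_i\to X_0$ and scales $r_i\to 0$ at which mean-convexity (in the sense $K_{t_2}\subseteq K_{t_1}\setminus M_{t_1}$ locally) fails. A standard point-picking argument, as in \cite{CHH}, lets us choose $X_i$ near $X_0$ and rescale by factors $\lambda_i\to\infty$ comparable to the inverse of the "regularity scale". Using the compactness theory for free boundary Brakke flows and the local regularity theorem from \cite{Bao}, we pass to a limit.

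\emph{Boundary case, identifying the limit.} The blowup domains $\lambda_i(\Omega-x_i)$ converge to either a half-space (if $\lambda_i d(x_i,\partial\Omega)$ stays bounded) or all of $\mathbb{R}^{n+1}$, so the limit is an ancient flow in one of these. Because of the half-cylindrical tangent flow at $X_0$ and monotonicity of the free boundary Gaussian density (reflection-type monotonicity from \cite{Edelen}), its entropy is bounded by that of the half-cylinder, and its tangent flow at $-\infty$ is the half-cylinder $\mathbb{R}^k_+\times S^{n-k}$ or $\mathbb{R}^k\times S^{n-k}_+$ (or the full cylinder in the whole-space case).

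\emph{Boundary case, reflection and classification.} Reflecting across the flat boundary $\partial\mathbb{R}^{n+1}_+$ turns a free boundary flow in the half-space into a reflection-symmetric ancient flow in $\mathbb{R}^{n+1}$. Orthogonality at the boundary makes this reflected flow smooth across the boundary where the original is smooth, and Brakke-regular in general. Its tangent flow at $-\infty$ is the full cylinder $\mathbb{R}^k\times S^{n-k}$, with multiplicity one. Bamler-Lai's classification of ancient asymptotically $k$-cylindrical flows then implies the limit is a round cylinder, a translating bowl-type soliton, an oval, or one of their noncollapsed ancient solutions, all of which are mean-convex. Smooth convergence on compact sets, supplied by the regularity theory and multiplicity-one, then transfers strict mean-convexity (strict inclusion of sublevel sets) back to the approximating flows at scale $\lambda_i^{-1}$. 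This contradicts the choice of $X_i$. Finally, mean-convexity of nearby regular points plus the avoidance principle of the level set flow yield the strict inclusion $K_{t_2}\cap B(x_0,\delta)\subseteq K_{t_1}\setminus M_{t_1}$.

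\emph{Expected main obstacle.} The hardest step is the reflection: showing the reflected limit is a genuine unit-regular, cyclic, multiplicity-one Brakke flow in $\mathbb{R}^{n+1}$ to which Bamler-Lai applies, including in the mixed regime where $\lambda_i d(x_i,\partial\Omega)$ is bounded and the boundary of $\lambda_i(\Omega - x_i)$ is only asymptotically flat. We would handle this with the boundary regularity theory of \cite{Bao,Edelen}, since smooth free boundary hypersurfaces meeting a plane orthogonally reflect to $C^{1,1}$ and hence, by parabolic regularity, to smooth flows. The point-picking must also keep track of distance to the boundary.
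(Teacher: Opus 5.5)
Your architecture is the paper's: blow up near $X_0$, double the limit, and apply Bamler--Lai via Lemma~\ref{std_lemma}. The reflection step you flag as the main obstacle is in fact handled by citation: preservation of unit-regularity and cyclicity under weak limits and reflections is \cite[Theorems 1.2 and 1.4]{Bao}.

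\textbf{The main gap: identifying the limit.} Definition~\ref{def_half_cyl} only gives convergence to a half-cylinder along \emph{some} sequence $\lambda_i\to\infty$. Your blowups at nearby points $X_i$ at scales $r_i$ probe the flow at scales unrelated to the $\lambda_i^{-1}$.
\begin{itemize}
\item Monotonicity of the reflected Gaussian density only tells you that the doubled limit has entropy at most that of $\mathbb{R}^k\times S^{n-k}$.
\item An entropy bound alone is not known to force the tangent flow at $-\infty$ to be a cylinder in general dimension. This is exactly why \cite{CHHW,BamlerLai2} use uniqueness of cylindrical tangent flows rather than entropy.
\item By writing ``the half-cylindrical tangent flow at $X_0$'' you tacitly assume uniqueness, which is not available a priori at boundary points.
\end{itemize}
The paper proves this in Lemma~\ref{lem:quanrigid} and Proposition~\ref{prop:unitan}. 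A small drop of reflected density between scales $r$ and $2r$ gives closeness to some free boundary shrinker in $\mathbb{R}^{n+1}_+$. The paper then doubles, choosing test functions symmetric under $x_1\mapsto -x_1$ so that the distance $d_{-1}$ survives reflection. Invoking the Colding--Ilmanen--Minicozzi isolation of cylinders \cite{CIM} along dyadic scales then shows that every tangent flow at $X_0$ is a multiplicity-one half-cylinder. Only with this, and isolation once more, can one conclude that the doubled blowup limit is an ancient asymptotically cylindrical flow or a blowup thereof.

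\textbf{Multiplicity one.} The definition gives multiplicity-one convergence of the solid regions $\mathcal{K}$, not of the Brakke measures. The flow from \cite{Bao} started at $M_0$ could a priori carry higher multiplicity near $X_0$. The paper restarts the Brakke flow (\cite[Corollary 5.5]{Bao}) at a time $t_*$ close to $t_0$ where the Gaussian area of $M_{t_*}$ centered at $X_0$ is $<1$. This is possible because half-cylinders have Gaussian area $<1$ and area ratios are bounded. As a result the reflected density at $X_0$ is $<2$.

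\textbf{The closing step.} Failure of $K_{t_2}\cap B\subseteq K_{t_1}\setminus M_{t_1}$ is not a pointwise property detectable at the regularity scale of a single point, and that scale is zero at singular points. So ``$H>0$ at regular points plus avoidance'' does not by itself give the inclusion across the singular set. The paper instead shows that each $x$ near $x_0$ lies on $M_t$ for at most one $t$.
\begin{itemize}
\item It blows up at scale $r_i=\max(R(x_i,t_i),\sqrt{t_i-t_i'})$.
\item Either both visits survive in the limit, contradicting Lemma~\ref{std_lemma}, or $r_i=R(x_i,t_i)$ and smooth convergence with $H>0$ rules out the second visit.
\item Part (ii) of the canonical neighborhood theorem then makes the arrival time defined and continuous on a full ball.
\item The inward motion of the tangent flow gives the strict inclusion.
\end{itemize}
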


This generalizes the resolution of Ilmanen's mean-convex neighborhood conjecture from \cite{BamlerLai2,CHH,CHHW} to the free boundary setting. In the course of the proof we also establish further interesting properties:

\begin{theorem}[canonical neighborhoods]\label{intro:cannghd}
If the outer free boundary flow $\{M_t\}_{t\geq 0}$ has a cylindrical or half-cylindrical singularity at $X_0=(x_0,t_0)\in \bar{\Omega}\times \mathbb{R}_+$ as defined above, then for every $\eps>0$, there exists a $
\delta=\delta(\varepsilon,X_0)>0$, such that for every $X\in \partial \mathcal{K}\cap P(X_0,\delta)$ we have:
\begin{enumerate}[(i)]
\item If $X$ is regular, then $H(X)\neq 0$ and the flow $\mathcal{D}_{|H(X)|^{-1}}(\partial\mathcal{K}-X)$ is $\eps$-close in $C^{\lfloor 1/\eps \rfloor}(P_{1/\eps}(0))$ to an ancient $k'$-cylindrical or half-cylindrical flow for some $k'\leq k$.\label{can_nbd_a}
\item If $X$ is singular, then every tangent flow at $X$ is a multiplicity-one shrinking cylinder or half-cylinder for some $k'\leq k$.\label{can_nbd_b}
\end{enumerate}
In particular, $(\partial \KK)^{\mathrm{sing}}\cap P(X_0,\delta)$ has dimension at most $k$.
\end{theorem}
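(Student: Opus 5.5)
We argue by contradiction and blow-up, following the strategy of \cite{CHH,CHHW,BamlerLai2} adapted to the free boundary setting via the foundational results of \cite{Bao}. Suppose that for some $\eps>0$ there are points $X_i\in\partial\KK$ with $X_i\to X_0$ for which (\ref{can_nbd_a}) or (\ref{can_nbd_b}) fails. We first rescale around $X_0$ by factors $\lambda_i\to\infty$ chosen so that the spacetime distance $\lambda_i d(X_i,X_0)$ stays bounded. We then pass to a limit using the compactness theory for free boundary Brakke flows and for outer flows from \cite{Bao}, which give unit-regularity, a local regularity theorem, and the two-sided avoidance structure of $\partial\KK$. After a diagonal argument, the limit of $\mathcal{D}_{\lambda_i}(\partial\KK-X_i)$ is an ancient free boundary flow $\MM$. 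Its domain is either all of $\RR^{n+1}$, if $\lambda_i\,\mathrm{dist}(x_i,\partial\Omega)\to\infty$, or a half-space, since $\Omega$ is smooth and blows up to a half-space. Because the tangent flow at $X_0$ is a multiplicity-one (half-)cylinder, monotonicity (the free boundary reflected Gaussian density of \cite{Edelen,Bao}) forces the entropy of $\MM$ to be at most that of the cylinder. A standard argument, using that the rescalings around $X_0$ at larger and larger scales converge to the cylinder, shows that $\MM$ is asymptotically (half-)$k$-cylindrical: its tangent flow at $-\infty$ is the given (half-)cylinder, with multiplicity one.

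Next we classify $\MM$. In the interior case, Bamler--Lai \cite{BamlerLai1,BamlerLai2} show that every ancient asymptotically $k$-cylindrical flow is, up to parabolic rescaling and rigid motion, a round shrinking $k'$-cylinder with $k'\le k$, or a smooth, noncollapsed, convex flow that splits off lines and has $k'$-cylindrical asymptotics. In the half-space case we reflect $\MM$ across $\partial\RR^{n+1}_+$. The orthogonality condition makes the doubled flow an ancient Brakke flow in $\RR^{n+1}$ without boundary, and its tangent flow at $-\infty$ is the doubled half-cylinder, i.e.\ a full $k$-cylinder. This holds for both types (\ref{sol_cyl1}) and (\ref{sol_cyl2}). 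Bamler--Lai then apply to the reflected flow, and reflection symmetry shows that the restriction is a (half-)cylindrical or half-ancient-oval-type flow meeting the boundary orthogonally. We take the ancient $k'$-cylindrical or half-cylindrical flows in (\ref{can_nbd_a}) to be precisely these classified flows. Smoothness, together with $H>0$ on every non-static flow in this list, now gives a contradiction. If the $X_i$ are regular, the local regularity theorem yields smooth convergence, so $H(X_i)\neq0$ and the rescaling by $|H(X_i)|^{-1}$ is comparable to $\lambda_i$, contradicting the failure of $\eps$-closeness. If the $X_i$ are singular, $\MM$ must be singular at the origin, so $\MM$ is a self-similarly shrinking (half-)cylinder there. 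Uniqueness of cylindrical tangent flows (Colding--Minicozzi, via reflection at boundary points) then shows that every tangent flow at $X_i$ is a (half-)cylinder with $k'\le k$.

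The main obstacle is the reflection step. The reflected limit must be a genuine Brakke flow satisfying the hypotheses of Bamler--Lai: multiplicity one, integrality, and the correct entropy bound. Only after this is established does the classification yield the flow that is to be restricted back. We would first prove this for the blow-up limit $\MM$ in the flat half-space, where even reflection is exact and the free boundary condition in the sense of \cite{Edelen} gives a Brakke flow across the plane. A second subtle point is ruling out limits with $H\equiv0$, such as static planes or half-planes. These are excluded by the entropy gap between planes and cylinders, once we know the blow-up is asymptotically cylindrical with multiplicity one. The dimension bound for $(\partial\KK)^{\mathrm{sing}}$ then follows from (\ref{can_nbd_b}) by White's stratification, whose free boundary version is available from \cite{Edelen,Bao}, since all tangent flows have at most $k$ translation-invariant directions.
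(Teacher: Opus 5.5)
\textbf{There is a genuine gap, and it lies in your choice of blow-up scale.}

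If $\lambda_i\to\infty$ with $\lambda_i d(X_i,X_0)$ bounded, then $\mathcal{D}_{\lambda_i}(\partial\KK-X_i)$ is a bounded parabolic translate of $\mathcal{D}_{\lambda_i}(\partial\KK-X_0)$. So your limit is just a translate of the tangent flow at $X_0$, a shrinking (half-)cylinder. Two things follow. First, the Bamler--Lai classification never enters. Second, at a boundary singularity your case $\lambda_i\,\mathrm{dist}(x_i,\partial\Omega)\to\infty$ cannot occur, since this quantity is at most $\lambda_i d(X_i,X_0)$.

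More seriously, nothing prevents $\lambda_i(X_i-X_0)$ from converging to a point of the singular axis $\RR^k\times\{0\}$ at time $0$. This is forced whenever $R(X_i)\ll d(X_i,X_0)$, e.g.\ for regular points near other points of the (up to $k$-dimensional) singular set. At such a limit point there is no smooth convergence, so your step ``$|H(X_i)|^{-1}$ is comparable to $\lambda_i^{-1}$'' fails. The geometry at the curvature scale of $X_i$ is invisible at scale $d(X_i,X_0)$, and these are exactly the points where the canonical neighborhood statement has content. In general, the two requirements you impose on $\lambda_i$ (bounded $\lambda_i d(X_i,X_0)$, and comparability with $|H(X_i)|$) are incompatible.

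The paper instead centers at $X_i$ and rescales by the regularity scale $R(X_i)$. This makes the origin a regular point of the limit with smooth convergence. Since $R(X_i)^{-1}d(X_i,X_0)$ may then be unbounded, asymptotic cylindricity of the limit cannot be read off from the tangent flow at $X_0$ and has to be derived. The paper derives it from the isolation of cylinders among shrinkers \cite{CIM} together with the uniqueness (up to rotation) of the tangent flow at $X_0$. It concludes that the limit is an ancient asymptotically cylindrical flow \emph{or a blowup thereof}, which is where part (ii) of the lemma on ancient $k$-cylindrical flows is used. An entropy bound from monotonicity, which is all your ``standard argument'' supplies, does not by itself force the tangent flow at $-\infty$ to be a cylinder in general dimensions.

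\textbf{The singular case has the same defect.} Knowing that the limit at scale $d(X_i,X_0)$ is singular at the origin says nothing about the tangent flows at $X_i$, which live at infinitely smaller scales. Uniqueness of cylindrical tangent flows only helps once one tangent flow at $X_i$ is already known to be cylindrical. The paper argues differently. If some tangent flow at $X_i$ is not a (half-)cylinder, isolation lets one choose $\lambda_i\to\infty$ so that $\mathcal{D}_{\lambda_i}(\MM-X_i)$ is not $\eps$-close to any cylinder or half-cylinder at any scale in $[1/i,i]$. This contradicts the limit being (a blowup of) an asymptotically cylindrical flow, which holds by the same argument as for regular points.

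\textbf{Two boundary-specific inputs are taken for granted.}
\begin{enumerate}
\item Rescalings at $X_0$ converge to the half-cylinder along \emph{every} sequence. The definition gives only one sequence. The paper notes that uniqueness of half-cylindrical tangent flows is not known, so your appeal to Colding--Minicozzi via reflection is not available. It proves uniqueness up to rotation of the axis via quantitative rigidity and \cite{CIM}.
\item There is a matching unit-regular, cyclic free boundary Brakke flow whose tangent flow at $X_0$ has multiplicity one. The hypothesis only concerns the set flow $\KK$. The paper obtains such a flow by restarting at a time $t_\ast<t_0$ at which the Gaussian area of $M_{t_\ast}$ centered at $X_0$ is below $1$.
\end{enumerate}
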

Here, an ancient $k$-cylindrical flow is an ancient, unit-regular, integral Brakke flow in $\mathbb{R}^{n+1}$ whose tangent flow at $-\infty$ is a round shrinking $\mathbb{R}^{k}\times S^{n-k}$, and a half-cylindrical flow is the restriction of such a flow to a free boundary flow defined in a half-space. Any ancient $k$-cylindrical flow is up to splitting Euclidean factors either a sphere, or a translating bowl, or an ancient oval, or an oval-bowl by the recent classification by Bamler-Lai \cite{BamlerLai2,BamlerLai1} (see also \cite{CHH,CHHW} for the earlier classification for $k=1$, which in turn built on \cite{ADS1,ADS2,BrendleChoi1,BrendleChoi2}, and see \cite{CDDHS,ChoiHaslhofer,CHH_trans,CHH_wing,DH_no_rot,DH_shape} for the earlier classification of ancient noncollapsed flows in $\RR^4$).\\

Our other main result concerns uniqueness of free boundary flow through singularities, which is captured by nonfattening. To discuss this, similarly as in \cite{HershkovitsWhite},
we consider the fattening time
\begin{equation}
T_{\mathrm{fat}}= \inf \{ t > 0 \, | \,  F_t(M)\subset\bar{\Omega} \textrm{ has nonempty interior} \},
\end{equation}
and the discrepancy time
\begin{equation}
T_{\mathrm{disc}}= \inf \{ t > 0 \, | \, M_t,\, M_t' \textrm{ and } F_t(M) \textrm{ are not all equal} \}.
\end{equation}
Note that no discrepancy implies nonfattening, i.e. $T_{\mathrm{disc}}\leq T_{\mathrm{fat}}$. We can now state our theorem, which shows that free boundary flow through singularities is nonfattening (and in fact has no discrepancy) as long as all singularities have a mean-convex neighborhood:

\begin{theorem}[nonfattening for free boundary flows]\label{thm_nonfatt}
If $0<T\leq T_{\mathrm{disc}}$, and all singularities at time $T$ have a mean-convex neighborhood for the inner or outer free boundary flow, then $T<T_{\mathrm{disc}}$.
\end{theorem}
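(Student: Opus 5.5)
We follow the strategy of Hershkovits--White, adapted to the free boundary setting via the foundational results of \cite{Bao}. Suppose the singularities at time $T$ have a mean-convex neighborhood for, say, the outer flow. We must show that $M_t$, $M_t'$ and $F_t(M)$ coincide for $t\in[T,T+\eta)$ for some $\eta>0$.

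The first step is to establish a structural picture at time $T$. Since $T\leq T_{\mathrm{disc}}$, for $t<T$ the three flows agree. By the free boundary Brakke flow theory of \cite{Bao}, the set $M_T$ is compact and its singular set $S$ at time $T$ is compact. Away from $S$ the flow is smooth near time $T$, including near $\partial\Omega$, where the free boundary version of local regularity applies. Hence there the three flows coincide for a short time by uniqueness of smooth free boundary flows. By compactness we cover $S$ by finitely many balls $B(x_i,\delta_i)$ on which the mean-convex neighborhood property holds on $(T-\delta_i^2,T+\delta_i^2)$. This property reads $K_{t_2}\cap B\subseteq K_{t_1}\setminus M_{t_1}$, either for the outer flow $\mathcal{K}$ or for the inner flow $\mathcal{K}'$. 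On the complement of these balls, pseudolocality or the smooth free boundary estimates give a uniform time interval on which $M_t=M_t'=F_t(M)$.

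The second step is to show that inside each ball $B_i$ the flows agree, and this is the main step. Following \cite{HershkovitsWhite}, we use the fact that $\partial\mathcal{K}$ is strictly moving inward in $B_i$ to show that the time-of-arrival function $u$ is well defined and continuous near the singular points, with $\partial\mathcal{K}\cap(B_i\times I)=\{u=t\}$ a level set. We then show that the region $\mathcal{K}\setminus\mathcal{K}'^{\circ}$ has empty interior. The intended argument runs as follows. Suppose $F_t(M)$ strictly contains $M_t$ somewhere in $B_i$. Take the outer flow of a slightly shifted hypersurface: because the flow is strictly mean-convex in $B_i$, the sets $F_{t+h}(K)$ for small $h>0$ are contained in the interior of $F_t(K)$. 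We then compare these with $F_t(K')$ using the avoidance principle for the free boundary level set flow, which holds against closed sets satisfying the free boundary condition, as in \cite{GigaSato,Bao}. The comparison forces $M_t'$ to lie between $\{u=t\}$ and $\{u=t+h\}$ for every $h$, hence $M_t'=M_t$, which in turn gives $F_t(M)=M_t$.

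The key point is a version of the one-sided perturbation lemma of Hershkovits--White, namely that mean-convex (time-translated) flows provide barriers. We will localize this with a cutoff near $\partial B_i$, where the flows are already known to agree by the first step. The main obstacle is this localization together with the treatment of $\partial\Omega$ when $x_i\in\partial\Omega$. The avoidance principle is only available for free boundary flows meeting $\partial\Omega$ orthogonally, and the time-shifted comparison flows are only defined in $B_i\cap\bar\Omega$. We would handle this by using the convexity of $\Omega$ and the free boundary avoidance principle of \cite{Bao}, in the form of a comparison between closed spacetime sets whose relative boundaries are weak free boundary flows. This makes the boundary portion $\partial\Omega\cap B_i$ cause no loss, since both flows satisfy the Neumann-type condition there.

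Finally, combining the agreement inside each $B_i$ with the agreement outside these balls for a uniform short time $\eta=\min_i\delta_i^2$ gives $T_{\mathrm{disc}}\geq T+\eta>T$.
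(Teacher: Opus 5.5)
Your Step 1 and Step 2 depend on each other, and the idea that breaks this dependence is missing. In Step 1 you claim that away from the singular balls $M_t=M'_t=F_t(M)$ holds for a uniform time, by pseudolocality or uniqueness of smooth free boundary flows. But uniqueness of smooth flows is a global statement. Two smooth flows that agree up to time $T$ in an annular region need not agree afterwards unless you control their lateral boundary data. In fact agreement away from $S_T$ is not local at all. If the ordered inner and outer sheets separate near a singular point, the strong maximum principle forces them to separate immediately along the whole connected regular part. So agreement away from $S_T$ is exactly as hard as agreement near $S_T$.

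Step 2, in turn, explicitly uses Step 1. The time-shifted sets $K_{t+h}$ avoid $K'_t$ only inside $B_i$; the flow is not mean-convex elsewhere, so $K_h\cap K'=\emptyset$ fails globally. A localized avoidance argument in $B_i$ therefore needs control on $\partial B_i$, and you take that control from Step 1. So neither step can be proved first. Saying you will ``localize with a cutoff'' does not fix this, because a cutoff acts on functions, not on the set flows you are comparing.

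The paper handles both regions at once, using one function and the characterization Theorem \ref{thm:hwcpam15}. It sets $w=\phi f+(1-\phi)d$. Here $f=\pm(u-t)$ is the arrival time in the mean-convex and mean-concave regions, $d$ is the signed distance to $M_t$, and $\phi$ is the Neumann cutoff of Lemma \ref{lem:fbcut}. The three regions are then handled as follows.
\begin{enumerate}[(i)]
\item Near the singular set $w=f$. Its super- and sublevel sets are time-translates of $K_t$, hence free boundary subsolutions.
\item In the regular region $w$ is smooth with $Dw\neq 0$ and $\Phi[w]=0$ on $\{w=0\}$. Hence $|\Phi[w]|\le\alpha|w|$, and passing to $e^{-\alpha t}w$ makes every level set a subsolution.
\item On $\partial\Omega$ one gets $Dw\cdot\nu_{\partial\Omega}=(1-\phi)\,Dd\cdot\nu_{\partial\Omega}$, which has the required sign by convexity of $\Omega$.
\end{enumerate}
Item (iii) is where convexity is actually used. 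It is not needed for avoidance inside $B_i$, as your proposal suggests, since free boundary subsolutions compare there without it. Avoidance for $\{|e^{-\alpha t}w|\ge c\}$ then gives $F_t(M_T)\subseteq\{w=0\}=M_t$, and the same argument applies to $M'_t$.

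Finally, the hypothesis only gives backward inclusions $K_{t_2}\cap B\subseteq K_{t_1}\setminus M_{t_1}$ for $t_1<t_2\le T$. You assumed them on the two-sided interval $(T-\delta_i^2,T+\delta_i^2)$. The forward inclusion has to be derived; the paper does this using transversality of $M_t$ to $\partial(P\cup N)$ together with avoidance.
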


This generalizes the main result from \cite{HershkovitsWhite} to the free boundary setting. To be precise, the assumption of Theorem \ref{thm_nonfatt} says that for each singular point $X_0=(x_0,t_0)$, there is a $\delta=\delta(X_0)>0$, such that for all $t_0-\delta^2< t_1 < t_2\leq t_0$ we have either
\begin{equation}
K_{t_2}\cap B(x_0,\delta) \subseteq K_{t_1}\setminus M_{t_1}\quad\mathrm{ or }\quad K'_{t_2}\cap B(x_0,\delta) \subseteq K'_{t_1}\setminus M'_{t_1}.
\end{equation}
So this allows for both inwards and outwards shrinking (half)-cylinders, and only needs backwards/one-sided parabolic neighborhoods.\\

Combining Theorem \ref{thm_mcn} and Theorem \ref{thm_nonfatt} we immediately obtain:

\begin{corollary}[free boundary flow through cylindrical singularities]\label{cor_uniqueness}
Free boundary flow through singularities is well-posed as long as all singularities are of cylindrical or half-cylindrical type.
\end{corollary}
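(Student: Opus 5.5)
The corollary is essentially a combination statement, so the plan is to make ``well-posed'' precise as $T_{\mathrm{disc}}=\infty$ and then run a continuity argument in time. Concretely, we take ``all singularities are of cylindrical or half-cylindrical type'' to mean the following. For every singular point $X_0=(x_0,t_0)$ of the outer flow $\partial\mathcal{K}$ (resp.\ of the inner flow $\partial\mathcal{K}'$), some tangent flow is a multiplicity-one shrinking solid cylinder, interior if $x_0\in\Omega$, or one of the half-cylinders \eqref{sol_cyl1}, \eqref{sol_cyl2} if $x_0\in\partial\Omega$. Under this hypothesis we claim $T_{\mathrm{disc}}=\infty$. Then $M_t=M_t'=F_t(M)$ for all $t$, and in particular the level set flow never fattens, since $T_{\mathrm{disc}}\le T_{\mathrm{fat}}$.

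The first step is to show $T_{\mathrm{disc}}>0$. For short times the initial hypersurface $M$ evolves smoothly as a free boundary flow. By avoidance, both $M_t$ and $M_t'$ coincide with this smooth evolution and with $F_t(M)$, so there is no discrepancy for small $t$.

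Next, suppose for contradiction that $T:=T_{\mathrm{disc}}<\infty$. Then $0<T\le T_{\mathrm{disc}}$. Let $X_0=(x_0,T)$ be any singular point at time $T$. We may assume it is a singular point of the outer flow; the inner case is symmetric, by swapping $K$ and $K'$. By hypothesis, $\mathcal{K}$ has a cylindrical or half-cylindrical singularity at $X_0$. Theorem \ref{thm_mcn} then gives $\delta>0$ such that
\[K_{t_2}\cap B(x_0,\delta)\subseteq K_{t_1}\setminus M_{t_1}\]
for all $T-\delta^2<t_1<t_2<T+\delta^2$, and in particular for all $t_2\le T$. This is exactly the backward one-sided mean-convex neighborhood required in Theorem \ref{thm_nonfatt}. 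Applying Theorem \ref{thm_nonfatt} yields $T<T_{\mathrm{disc}}$, a contradiction. Hence $T_{\mathrm{disc}}=\infty$.

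The main point needing care is consistency of the hypothesis across the three flows. Before time $T_{\mathrm{disc}}$ the outer flow, the inner flow and the level set flow agree, so a singular point at time $T$ is a limit point of the common flow. We must make sure the hypothesis is applied to whichever of $\partial\mathcal{K}$ or $\partial\mathcal{K}'$ actually carries the (half-)cylindrical tangent flow. This is why the hypothesis is phrased for both the inner and the outer flow, and why Theorem \ref{thm_nonfatt} only needs a mean-convex neighborhood for one of them. Once this bookkeeping is fixed, the corollary follows directly from the two theorems as outlined.
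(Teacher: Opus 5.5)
Your proposal is correct and is essentially the paper's argument: the paper obtains the corollary ``immediately'' by combining Theorem \ref{thm_mcn} with Theorem \ref{thm_nonfatt}. Your continuity argument in $T_{\mathrm{disc}}$ just spells out that step, with $T_{\mathrm{disc}}>0$ coming from short-time smooth existence and inward versus outward (half-)cylinders handled by swapping $K$ and $K'$.
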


This generalizes the resolution of White's uniqueness conjecture for flows through cylindrical singularities from \cite{BamlerLai2,CHH,CHHW} to the free boundary setting. The conclusion of Corollary \ref{cor_uniqueness} is to some extent sharp, since conical singularities typically lead to nonuniqueness \cite{AIC,CDHS}.

\bigskip

\subsection*{Outline of the proofs}Given any half-cylindrical singularity of the outer free-boundary flow as in Definition \ref{def_half_cyl}, we first show that there exists a unit-regular, cyclic, free boundary integral Brakke flow, whose support is given by the outer flow,
and which has a multiplicity-one near the singularity. While the existence of some matching flow was already established in \cite{Bao}, here we need to argue more carefully to ensure that we actually get multiplicity-one. Another related issue is that for half-cylindrical singularities uniqueness of tangent flows is unknown. To get around this, using the isolation of cylinders in the space of shrinkers from \cite{CIM}, we show half-cylindrical tangent flows are unique, possibly up to rotation of their axis, which is enough for our purpose.\\

Having established these structural results about half-cylindrical singularities, we then generalize the proofs of the mean-convex and canonical neighborhood theorem from \cite{BamlerLai2,CHH,CHHW} to our setting. Here, we can also encounter blowup limits that are ancient unit-regular, cyclic, free boundary integral Brakke flows defined in a half-space. To deal with them we use the preservation under weak limits and reflections from \cite{Bao}, which enables us to apply the classification result from \cite{BamlerLai2}.\\

To prove Theorem \ref{thm_nonfatt}, we generalize the arguments from \cite{HershkovitsWhite} to our free boundary setting. Specifically, we first derive a general criterion, which says that the zero level of a function is the free boundary level set flow, provided the function is a free boundary subsolution in a neighborhood of the singular part, and satisfies several conditions in the regular part, including in particular an appropriate sign for the normal derivative at the boundary. We then construct a function with all the desired properties by interpolating between the arrival time function in the singular part and the signed distance function in the regular part. Here, most interestingly, regarding the boundary terms we observe that the normal derivative of the signed distance has the favourable sign thanks to convexity, and to facilitate the interpolation we construct a cutoff function with vanishing normal derivative.

\bigskip

{\bf Acknowledgements:} Y.B. has been supported by a Blyth Fellowship, a Mary H. Beatty Fellowship, a Department of Mathematics Graduate Program Award and an International Graduate Student Scholarship from the University of Toronto.
R.H. has been supported by the NSERC Discovery Grant RGPIN-2023-04419.\\

\section{Mean-convex neighborhoods for free boundary flows}

Let us recall some background relevant for this section. As introduced by Edelen \cite{Edelen}, a free boundary integral Brakke flow in $\bar{\Omega}$ is given by a family of Radon measures $\{\mu_t\}_{t\in I}$ supported in $\bar{\Omega}$, such that
\begin{equation}
\left. \int \phi(\cdot,t)\, d\mu_t\right|_{t=a}^b\leq \int_a^b \int \left( - |H_\ast |^2\phi+H_\ast \cdot D\phi + \partial_t\phi \right)\, d\mu_tdt
\end{equation}
for all nonnegative test functions $\phi$ satisfying the Neumann boundary condition $\nu_{\partial \Omega}\cdot D\phi=0$. Here, $\nu_{\partial \Omega}$ is the outwards unit normal of $\partial\Omega$,
\begin{equation}
H_\ast = H- 1_{\partial \Omega}(H\cdot  \nu_{\partial \Omega})\nu_{\partial\Omega},
\end{equation}
and at almost every time associated to $\mu_t$ there is an integral varifold $V_{\mu_t}$ with first variation
\begin{equation}
\delta V_{\mu_t}(X)=- \int H_\ast\cdot X\, d\mu_t\quad \textrm{for all $X$ tangential to $\partial\Omega$}.
\end{equation}
The support of $\mathcal{M}=\{\mu_t\}_{t\in I}$ is by definition the space-time set
\begin{equation}
\mathrm{spt}(\mathcal{M})=\overline{\bigcup_{t\in I} \mathrm{spt}(\mu_t)\times \{t\}}.
\end{equation}
By Edelen's monotonicity formula \cite[Theorem 5.5]{Edelen}, given any $X_0=(x_0,t_0)\in \mathrm{spt}(\mathcal{M})$, the reflected Gaussian area $\Theta_{\mathrm{refl}(\partial \Omega)}(\mathcal{M},X_0,r)$ is almost monotone, and gives rise to the reflected Gaussian density
\begin{equation}
\Theta_{\mathrm{refl}(\partial \Omega)}(\mathcal{M},X_0)= \lim_{r\to 0}\Theta_{\mathrm{refl}(\partial \Omega)}(\mathcal{M},X_0,r).
\end{equation}
All the free boundary integral Brakke flows that we encounter will be unit-regular and cyclic as in \cite[Definition 1.1]{Bao}. Namely, every $X_0\in \mathrm{spt}(\mathcal{M})$ with $\Theta_{\mathrm{refl}(\partial \Omega)}(\mathcal{M},X_0) =1$ is a regular point, and for almost every $t$ the associated $\mathbb{Z}_2$ flat chain $[V_{\mu_t}]$ satisfies $\mathrm{spt}(\partial [V_{\mu_t}])\subseteq \partial \Omega$.\\ 

\subsection{Structure of half-cylindrical singularities} In this subsection, we show that half-cylindrical singularities are unique and possess a suitable matching free boundary Brakke flow. We start with the latter:

\begin{proposition}[matching free boundary Brakke flow]\label{prop:matchbrakke}
Assume the outer free boundary flow has a half-cylindrical singularity at $X_0=(x_0,t_0)\in \partial\Omega\times \mathbb{R}_+$.
Then, there exists a unit-regular, cyclic free boundary integral Brakke flow $\mathcal{M}=\left\{\mu_t\right\}_{t \geq t_\ast}$, where $t_\ast=t_\ast(X_0)<t_0$, whose support is given by the outer flow $\{M_t\}_{t\geq t_\ast}$, such that one of the tangent flows to $\mathcal{M}$ at $X_0$ is a multiplicity-one half-cylinder, either $\{\mathbb{R}^k_+ \times S^{n-k}(0,\sqrt{2(n-k)|t|})\}_{t\leq 0}$ or $\{S_+^{n-k}(0,\sqrt{2(n-k)|t|})\times \mathbb{R}^k\}_{t\leq 0}$.
\end{proposition}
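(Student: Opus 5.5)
We obtain the matching flow from the construction in \cite{Bao}, rerun so that it also remembers how $\partial\mathcal{K}$ was approximated. Following Hershkovits--White, choose smooth compact free boundary hypersurfaces $M^j\subset\bar\Omega$ with enclosed regions $K^j\supseteq K$ and $\bigcap_j K^j=K$, so that $K^j$ lies strictly outside $K$ away from $\partial\Omega$. Let $\mathcal{M}^j$ be Edelen's unit-regular, cyclic free boundary Brakke flows starting from $\mathcal{H}^n\lfloor M^j$, for example elliptic regularization limits as in \cite{Edelen,Bao}. The compactness of free boundary Brakke flows, together with the preservation of unit-regularity and cyclicity under weak limits from \cite{Bao}, gives a subsequential limit $\mathcal{M}$. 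As in \cite{Bao}, its support is contained in $\partial\mathcal{K}$, and in fact equals the outer flow $\{M_t\}$; this uses the inclusion $\spt{\mathcal{M}^j}\subseteq\partial\mathcal{K}^j$ and the fact that $\partial\mathcal{K}^j\to\partial\mathcal{K}$ in the Hausdorff sense from the outside.

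The real content is multiplicity one near $X_0$. Choose $t_\ast<t_0$ close to $t_0$, using the hypothesis that $\mathcal{D}_{\lambda_i}(\mathcal{K}-X_0)$ converges smoothly with multiplicity one to the solid half-cylinder \eqref{sol_cyl1} or \eqref{sol_cyl2}. Then at scale $\lambda_i^{-1}$, around time $t_0-\lambda_i^{-2}$, the set $\partial\mathcal{K}$ is a smooth graph over a half-cylinder meeting $\partial\Omega$ orthogonally. We restart the construction at such a time $t_\ast$, which is where $t_\ast(X_0)$ comes from. We take the initial measure to be $\mathcal{H}^n\lfloor M_{t_\ast}$ and approximate $K_{t_\ast}$ from outside by smooth $K^j$, noting that $M_{t_\ast}$ is smooth near $x_0$ at this scale. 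Local regularity (Brakke/White local regularity in the reflected form from \cite{Edelen}) then shows that $\mathcal{M}$ agrees with $\mathcal{H}^n\lfloor M_t$ on a parabolic region $U_i$ around the points $X_0+(0,-\lambda_i^{-2})$. Here $U_i$ has size comparable to $\lambda_i^{-1}$ and lies where $\partial\mathcal{K}$ is smooth. Multiplicity one holds there because the approximating smooth flows converge smoothly, so sheets cannot double.

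Next we pass to tangent flows. Rescale $\mathcal{M}$ by $\mathcal{D}_{\lambda_i}$ about $X_0$, using the reflected monotonicity formula \cite[Theorem 5.5]{Edelen} and the uniform area bounds. After a subsequence this converges to a tangent flow $\mathcal{T}$, which is a self-similar free boundary Brakke flow in the half-space $\RR^{n+1}_+$. Its support is the boundary of the limit of $\mathcal{D}_{\lambda_i}(\mathcal{K}-X_0)$, namely the half-cylinder. Near time $-1$, the smooth multiplicity-one convergence on $U_i$ gives $\mathcal{T}_{-1}=\mathcal{H}^n\lfloor(\text{half-cylinder})$ on a fixed region. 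A self-similar Brakke flow is determined by its time $-1$ slice, and its multiplicity is constant on connected smooth components by the constancy theorem. So $\mathcal{T}$ is the multiplicity-one half-cylinder.

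The main obstacle is the second paragraph. The proof must ensure that the limit of the approximating flows does not acquire multiplicity two, for instance from $M^j$ converging to $M$ from one side while the inner flow contributes another sheet, and that the support is exactly $M_t$ rather than a proper subset. I would first try to exploit the two-sidedness of the smooth multiplicity-one picture of $\mathcal{K}$ near $X_0$. Concretely, both $K$ and $\overline{\bar\Omega\setminus K}$ have smooth half-cylindrical boundary at scale $\lambda_i^{-1}$, so by the free boundary avoidance principle all the approximating flows are squeezed into a thin slab around $M_t$. The Gaussian density bound $\Theta_{\mathrm{refl}}<2$, inherited by semicontinuity from the half-cylinder's entropy, then rules out higher multiplicity.
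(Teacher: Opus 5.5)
There is a genuine gap, and it is where you suspected: the multiplicity-one claim on the regions $U_i$.

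Restarting at $t_\ast$ with $\mu_{t_\ast}=\mathcal{H}^n\llcorner M_{t_\ast}$ and using local regularity only gives multiplicity one for a time of order $t_0-t_\ast$ after $t_\ast$, at scale $\sqrt{t_0-t_\ast}$. It says nothing about the regions near $X_0+(0,-\lambda_i^{-2})$ for the infinitely many $i$ with $\lambda_i^{-2}\ll t_0-t_\ast$, and those are the only ones the tangent flow sees. To apply local regularity there you would already need Gaussian density ratios close to one at those points and scales, which is the multiplicity statement itself.

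The justification ``the approximating smooth flows converge smoothly, so sheets cannot double'' also fails. The approximants $\mathcal{M}^j$ are Brakke flows that are themselves singular near $X_0$, and weak limits of multiplicity-one flows can have multiplicity two on a smooth support. Your fallback in the last paragraph does not close the gap either. Squeezing everything into a thin slab around $M_t$ is exactly how two sheets collapse onto one. Upper semicontinuity of Gaussian density runs the wrong way: it gives $\Theta(\text{limit})\geq\limsup\Theta(\text{approximants})$, not an upper bound. And the half-cylinder's entropy bounds the density at $X_0$ only if you already know the tangent flow has multiplicity one, which is circular.

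The missing idea is an upper bound on $\Theta_{\mathrm{refl}(\partial\Omega)}(\mathcal{M},X_0)$ from Edelen's reflected monotonicity formula centered at $X_0$, started at the restart time. The steps are:
\begin{enumerate}[(1)]
\item Take $t_\ast=t_0-\lambda_i^{-2}$ for a single large $i$.
\item The initial measure $\mu_{t_\ast}=\mathcal{H}^n\llcorner M_{t_\ast}$ has multiplicity one by construction.
\item After rescaling by $\lambda_i$, the set $M_{t_\ast}$ is smoothly close to the half-cylinder on a large ball, and the area-ratio bound \eqref{area_ratios_bdd} controls the Gaussian tail.
\item Hence the non-reflected Gaussian area of $\mu_{t_\ast}$ centered at $X_0$ at scale $\sqrt{t_0-t_\ast}$ is close to that of the half-cylinder, so it is $<1$, and the reflected one is $<2$.
\item Almost-monotonicity, whose error is small when $t_0-t_\ast$ is small, propagates this to $\Theta_{\mathrm{refl}(\partial\Omega)}(\mathcal{M},X_0,r)<2$ for all small $r$.
\item Any tangent flow along a subsequence of $\lambda_i$ has the half-cylinder as support. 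Its double is therefore $m$ copies of the round shrinking cylinder, with density $m\,\Theta_{\mathrm{cyl}}$ where $\Theta_{\mathrm{cyl}}\in(1,2)$. The bound $<2$ forces $m=1$.
\end{enumerate}
This is the paper's argument. It needs no local regularity on the $U_i$ and no analysis of the approximating flows. It only needs a matching flow with initial condition $\mathcal{H}^n\llcorner M_{t_\ast}$, which is provided by \cite[Corollary 5.5]{Bao}.
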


\begin{proof} By \cite[Theorem 1.8]{Bao}, there exists a unit-regular, cyclic free boundary integral Brakke flow $\mathcal{M}=\{\mu_t\}_{t\ge 0}$ starting at $\mu_0=\mathcal{H}^n\llcorner M_0$,
whose support is given by the outer flow $\{M_t\}_{t\geq 0}$, namely
\begin{equation}
\mathrm{spt}(\mathcal{M})=\bigcup_{t\geq 0} M_t.
\end{equation}
In particular, thanks to Edelen's monotonicity formula \cite[Theorem 5.5]{Edelen}, the Brakke flow $\mathcal{M}$ has bounded area ratios, namely
\begin{equation}\label{area_ratios_bdd}
\sup_{x\in\bar{\Omega}}\sup_{r\leq 1}\frac{\mu_t(B_r(x))}{r^n}\leq C.
\end{equation}

Now, according to Definition \ref{def_half_cyl} our assumptions says that there exists a sequence $\lambda_i\to\infty$, such that the parabolically rescaled outer flows
$\mathcal{K}^i:=\mathcal{D}_{\lambda_i}(\mathcal{K}-X_0)$
converge locally smoothly with multiplicity-one to a solid shrinking half-cylinder, given by either \eqref{sol_cyl1} or \eqref{sol_cyl2}.
Consider the parabolically rescaled Brakke flows
$\mathcal{M}^i := \mathcal{D}_{\lambda_i}(\mathcal{M}-X_0)$ along the same sequence of rescaling factors $\lambda_i\to \infty$.
By \cite[Proposition 6.2]{Edelen}, after passing to a subsequence, $\mathcal{M}^i$ converges to a free boundary integral tangent flow $\hat{\mathcal{M}}_{X_0}$ in $\mathbb{R}^{n+1}_+$, which can be doubled to obtain a self-similarly shrinking integral Brakke flow $\hat{\mathcal{M}}_{X_0}'$ in $\mathbb{R}^{n+1}$.
Since $\partial\mathcal{K}^i=\mathrm{spt}(\mathcal{M}^i)$ converges locally smoothly to a shrinking half-cylinder given by the boundary of either \eqref{sol_cyl1} or \eqref{sol_cyl2}, it follows that
\begin{equation}
\mathrm{spt}(\hat{\mathcal{M}}_{X_0}')=\bigcup_{t\leq 0}\left(\RR^k\times S^{n-k}(\sqrt{2(n-k)|t|}) \right).
\end{equation}
Moreover, by \cite[Theorem 6.4]{Edelen} the Gaussian density of the doubled tangent flow is equal to the reflected Gaussian density at $X_0$, namely
\begin{equation}
\Theta(\hat{\mathcal{M}}_{X_0}')=\Theta_{\mathrm{refl}(\partial \Omega)}(\mathcal{M},X_0).
\end{equation}
To address the potential issue of multiplicity, note that since shrinking cylinders have Gaussian area less than 2, and thus shrinking half-cylinders have Gaussian area less than 1, taking also into account \eqref{area_ratios_bdd} to control the Gaussian tail, we can find some $t_\ast < t_0$, such that
\begin{equation}
\frac{1}{\left(4 \pi\left(t_0-t_*\right)\right)^{n / 2}} \int_{\bar{\Omega}} \exp \left(\frac{-\left|x-x_0\right|^2}{4\left(t_0-t_*\right)}\right) d \mathcal{H}^n\llcorner M_{t_*}<1.
\end{equation}
Now we apply \cite[Corollary 5.5]{Bao} to get another unit-regular, cyclic free boundary integral Brakke flow $\MM=\{\mu_t\}_{t\ge t_*}$ with support given by $\{M_t\}_{t\geq t_\ast}$ and initial condition $\mu_{t_*}=\mathcal{H}^n\llcorner {M_{t_*}}$. Hence, taking into account again Edelen's monotonicity formula from \cite{Edelen}, provided $t_0-t_\ast$ is chosen small enough, for $r>0$ small enough we get
\begin{equation}
\Theta_{\mathrm{refl}(\partial \Omega)}(\mathcal{M},X_0,r)<2,
\end{equation}
and hence some tangent flow of $\MM$ at $X_0$ along a subsequence of $\{\lambda_i\}$ must be a shrinking half-cylinder with multiplicity-one as desired.
\end{proof}

To establish uniqueness of half-cylindrical tangent flows, we will use that cylinders are isolated in the space of shrinkers. Specifically, by \cite[Corollary 2.12]{CIM} there exists a constant $\kappa>0$ depending only on the dimension and a bound for the area ratios, such that if $\Sigma=\mathrm{spt}(\nu)$ is a varifold shrinker in $\mathbb{R}^{n+1}$ with area ratios bounded as in \eqref{area_ratios_bdd}, then
\begin{equation}\label{isolation_lemma}
d_{-1}\left(\mu_{\RR^k\times \sqrt{2(n-k)}S^{n-k}},\nu\right)< \kappa \quad \Rightarrow \quad \Sigma \textrm{ is a round cylinder}.
\end{equation} 
Here, we work with the distance between Radon measures defined by
\begin{equation}\label{equ:dv}
d_t(\mu,\nu):=\sum_{j\geq 1}2^{-j}\left| \int \varphi_j\Big(\tfrac{x}{\sqrt{|t|}}\Big) e^{-\frac{|x|^2}{4|t|}}\, d(\mu -\nu) \right| \quad (t<0),
\end{equation}
where $\{\varphi_j\}_{j\geq 1}$ is a countable dense subset of the unit ball in $C_c(\RR^{n+1})$, which for later use we choose such that
\begin{equation}\label{choice_basis}
\varphi_{2j+1}(x_1,x_2,\ldots,x_{n+1})=\varphi_{2j}(-x_1,x_2,\ldots,x_{n+1}).
\end{equation}

Now, given a free boundary integral Brakke flow $\mathcal{M}=\{\mu_t\}$ in $\bar{\Omega}$, for any point $X_0=(x_0,t_0)\in\partial \Omega\times\mathbb{R}_+$ and scale $r>0$, we consider the parabolically rescaled Brakke flow $\mathcal{D}_{1/r}(\mathcal{M}-X_0)=\{\mu_t^{X_0,r}\}$. This is a free boundary integral Brakke flow in $\tfrac{1}{r}\cdot(\Omega-x_0)$, but in particular constitutes a family of Radon measures in $\mathbb{R}^{n+1}$, so it makes sense to consider its distance to a free boundary Brakke flow defined in $\RR^{n+1}_+$.

\begin{lemma}[quantitative rigidity]\label{lem:quanrigid}
For every $\varepsilon>0$, there exists $\delta=\delta(X_0)>0$ and $r_0=r_0(X_0)>0$, such that if $X_0\in\mathrm{spt}(\mathcal{M})$ and
\begin{equation}\label{density_drop}
\Theta_{\mathrm{refl}(\partial \Omega)}\left(\mathcal{M}, X_0, 2r\right)-\Theta_{\mathrm{refl}(\partial \Omega)}\left(\mathcal{M}, X_0,r\right) < \delta
\end{equation}
for some $0<r<r_0$, then there exists a self-similarly shrinking free boundary integral Brakke flow $\mathcal{N}=\{\nu_t\}_{t\leq 0}$ in  $\RR^{n+1}_+$, such that
\begin{equation}\label{close_to_shrinker}
d_t(\mu^{X_0,r}_t,\nu_t)< \eps\qquad \forall t\in[-4,-1].
\end{equation}
\end{lemma}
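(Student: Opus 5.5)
We argue by contradiction and compactness, in the style of the standard quantitative rigidity statements for Brakke flows without boundary. Suppose the claim fails for some $\eps>0$. Then there are $\delta_i\to 0$, $r_i\to 0$ (with $r_i<r_{0,i}\to 0$) and scales $r_i$ for which \eqref{density_drop} holds with $\delta_i$. Yet for every self-similarly shrinking free boundary integral Brakke flow $\mathcal{N}$ in $\RR^{n+1}_+$ there is some $t\in[-4,-1]$ with $d_t(\mu^{X_0,r_i}_t,\nu_t)\geq \eps$.

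The first step is compactness. Consider the rescaled flows $\mathcal{M}^i=\mathcal{D}_{1/r_i}(\mathcal{M}-X_0)$. These are free boundary integral Brakke flows in the domains $\tfrac{1}{r_i}(\Omega-x_0)$, which converge smoothly and locally to a half-space after a fixed rotation; we normalize coordinates so that this half-space is $\RR^{n+1}_+$. By Edelen's monotonicity formula the area ratios are uniformly bounded as in \eqref{area_ratios_bdd}. This lets us pass to a subsequence that converges, via Edelen's compactness for free boundary Brakke flows with converging domains (as used in \cite[Proposition 6.2]{Edelen} for tangent flows), to a free boundary integral Brakke flow $\mathcal{N}=\{\nu_t\}$ in $\RR^{n+1}_+$ on $t\in[-4,-1]$. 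The area-ratio bound also controls the Gaussian tails. So the convergence $\mu^{X_0,r_i}_t\rightharpoonup\nu_t$ as Radon measures, for every $t$ in the range, implies convergence in $d_t$, because each term of \eqref{equ:dv} converges and the series has summable weights. The convergence holds at every $t$, not just almost every $t$, once we use Brakke's one-sided continuity and the fact that the limit has no mass drops at a.e. time. To get $d_t$-closeness uniformly in $t\in[-4,-1]$ we need equicontinuity in $t$ of $\int\varphi_j(x/\sqrt{|t|})e^{-|x|^2/4|t|}d\mu_t$. This follows from the Brakke inequality applied with test functions satisfying the Neumann condition. Near the rescaled boundary we reflect across $\partial\Omega$, using the reflected monotonicity framework, and we use the symmetric choice \eqref{choice_basis} so that the even combinations satisfy the Neumann condition on the limit half-space.

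The second step is to show the limit is self-similar. Rescaling the monotone quantity gives
\[
\Theta_{\mathrm{refl}(\partial\Omega)}(\mathcal{M}^i,0,2)-\Theta_{\mathrm{refl}(\partial\Omega)}(\mathcal{M}^i,0,1)<\delta_i\to 0,
\]
and since $r_i\to0$ the error terms in Edelen's almost-monotonicity (coming from the curvature of $\partial\Omega$) vanish in the limit. Passing to the limit, and doubling $\mathcal{N}$ across $\partial\RR^{n+1}_+$ to get an integral Brakke flow $\mathcal{N}'$ in $\RR^{n+1}$ as in the proof of Proposition \ref{prop:matchbrakke}, we find that the Gaussian area of $\mathcal{N}'$ centered at the origin is constant on $[-4,-1]$. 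Huisken's monotonicity identity then forces $H+\tfrac{x^\perp}{2|t|}=0$ a.e. on $[-4,-1]$. Hence $\mathcal{N}'$, and with it $\mathcal{N}$, agrees on $[-4,-1]$ with the restriction of a self-similarly shrinking flow, which extends to all $t\le0$ by parabolic rescaling. This contradicts $d_t(\mu^{X_0,r_i}_t,\nu_t)\geq\eps$ for large $i$.

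I expect the main obstacle to be the limiting step in the monotonicity. Two things must be justified. First, the reflected Gaussian densities at scales $1$ and $2$ must actually converge to those of the limit; this requires tail control and the fact that the reflection across the curved $\partial\Omega$ converges to reflection across the flat boundary. Second, the stationarity of the Gaussian area must genuinely yield the shrinker equation for the free boundary limit. For the second point, I would first try doubling everything and invoking the no-boundary rigidity statement for the doubled flow. This is justified by \cite[Theorem 6.4]{Edelen}, which identifies the reflected density with the density of the doubled flow.
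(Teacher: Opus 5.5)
Your proof is correct and has the same contradiction--compactness skeleton as the paper's, but it gets self-similarity of the limit by a different mechanism.

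\textbf{The paper's route.} Since $X_0$ is fixed and $r_i\to 0$, the flows $\mathcal{D}_{1/r_i}(\mathcal{M}-X_0)$ are simply a tangent-flow sequence at $X_0$. By \cite[Theorem 6.4]{Edelen} they subconverge to a self-similarly shrinking free boundary flow. The density-drop hypothesis plays no real role there. With $\delta,r_0$ allowed to depend on $X_0$, it is in any case automatic for small $r$, since $\Theta_{\mathrm{refl}(\partial\Omega)}(\mathcal{M},X_0,r)$ has a limit as $r\to 0$.

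\textbf{Your route.} You follow \cite[Lemma 3.2]{CHN}: pass the density drop to the limit, double across $\partial\RR^{n+1}_+$, and use the equality case of Huisken's monotonicity on $[-4,-1]$. This costs the two verifications you flagged:
\begin{enumerate}[(i)]
\item convergence of the reflected Gaussian areas at times $-4$ and $-1$, including tail control and the passage from reflection across the curved boundary to reflection across the flat one;
\item self-similarity from constant Gaussian area on a finite interval.
\end{enumerate}
In exchange, your argument genuinely uses the hypothesis. It would therefore survive in a truly quantitative version where the base point varies and the constants are uniform, which the paper's shortcut does not provide.

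\textbf{Two small corrections.} First, uniform-in-$t$ closeness on $[-4,-1]$ does not come from equicontinuity via Brakke's inequality, since Brakke flows can lose mass suddenly. It comes from the one-sided monotonicity of $t\mapsto \mu^i_t(\phi)-Ct$ for nonnegative Neumann test functions, combined with the time-continuity of the self-similar limit. So that step belongs after your second step, not before it. Second, \eqref{choice_basis} plays no role here. The $\varphi_j$ are merely continuous, so no Neumann condition is at stake. That choice is only used for the doubling estimate in the proof of Proposition \ref{prop:unitan}.
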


\begin{proof}
We argue similarly as in \cite[Lemma 3.2]{CHN} and \cite[Proposition 2.13]{CIM}. 
Suppose towards a contradiction there is some $\eps>0$, such that \eqref{density_drop} holds with $r_i=\delta_i=1/i$, 
but for every self-similarly shrinking free boundary integral Brakke flow $\mathcal{N}=\{\nu_t\}_{t\leq 0}$ in  $\RR^{n+1}_+$ we have
\begin{equation}\label{eq_dist_cont}
d_{t_i}(\mu^{X_0,r_i}_{t_i},\nu_{t_i})\geq \eps\qquad \textrm{for some } t_i\in[-4,-1].
\end{equation}
However, by \cite[Theorem 6.4]{Edelen} a subsequence of $\mathcal{M}^i=\{ \mu^{X_0,r_i}_{t}\}$ converges to a self-similarly shrinking free boundary integral Brakke flow in $\RR^{n+1}_+$. This yields a contradiction with \eqref{eq_dist_cont}, and thus proves the lemma.
\end{proof}

\begin{proposition}[uniqueness of half-cylindrical tangent flows]\label{prop:unitan}If $\mathcal{M}$ is a free boundary integral Brakke flow, and some tangent flow at $X_0\in\partial\Omega\times\mathbb{R}_+$ is a multiplicity-one shrinking half-cylinder, then they all are.
\end{proposition}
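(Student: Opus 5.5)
We will argue by a continuity (connectedness) argument in the scale parameter, combining the quantitative rigidity of Lemma \ref{lem:quanrigid} with the isolation of cylinders \eqref{isolation_lemma}. Since some tangent flow at $X_0$ is a multiplicity-one half-cylinder, the reflected density $\Theta_{\mathrm{refl}(\partial\Omega)}(\mathcal{M},X_0)$ equals the Gaussian area of the doubled cylinder, by \cite[Theorem 6.4]{Edelen}. Every tangent flow at $X_0$ is a free boundary self-similar shrinker in $\RR^{n+1}_+$ whose doubling has this same density. The density alone does not identify the shrinker, so we need an argument that propagates closeness to a half-cylinder through all small scales.

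\textbf{Setup.} Let $\Theta_0$ denote the limiting reflected density. By almost-monotonicity, for every $\delta>0$ there is $r_1>0$ such that \eqref{density_drop} holds for all $r<r_1$. Fix $\eps>0$ much smaller than $\kappa$ from \eqref{isolation_lemma}; we may assume $\kappa$ is uniform, since area ratios are bounded as in \eqref{area_ratios_bdd}. Lemma \ref{lem:quanrigid} then gives, for each $r<\min(r_0,r_1)$, a free boundary shrinker $\mathcal{N}^r$ with $d_t(\mu^{X_0,r}_t,\nu^r_t)<\eps$ on $[-4,-1]$.

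\textbf{Key step.} Let $\mathcal{C}$ be the set of doubled half-cylinders of both types in \eqref{sol_cyl1}--\eqref{sol_cyl2}, with arbitrary rotations of the axis compatible with $\partial\RR^{n+1}_+$. We show that whenever the doubled shrinker is $\kappa$-close to some cylinder, it is exactly a round cylinder. Here we use the symmetric choice \eqref{choice_basis} of the test functions: it ensures the $d_{-1}$ distance of the doubling is controlled by that of the half-space measure. Since the doubled shrinker also has density $\Theta_0$ and is symmetric under reflection across $\partial\RR^{n+1}_+$, it must be a half-cylinder in $\mathcal{C}$ of the same $k$, with multiplicity one.

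\textbf{Dichotomy and propagation.} For each small $r$, the shrinker $\mathcal{N}^r$ is either $\eps$-close to $\mathcal{C}$ or at distance at least $\kappa-\eps$ from it. Note that $d_t(\mu^{X_0,r}_t,\cdot)$ at $t\in[-4,-1]$ compares scales $r$ and $2r$; more precisely, $\mu^{X_0,r}_{-4}$ is a rescaling of $\mu^{X_0,2r}_{-1}$. The sets of scales where $\mu^{X_0,r}_{-1}$ is $2\eps$-close to $\mathcal{C}$, and where it is $(\kappa-2\eps)$-far from $\mathcal{C}$, therefore cover all small $r$. Overlapping windows $[r,2r]$ prevent jumping between the two sets: closeness at scale $r$ gives a shrinker close to $\mathcal{C}$ on the whole window, hence a cylinder, hence closeness at scale $2r$ and vice versa. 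Since $r\mapsto\mu^{X_0,r}$ is continuous in $d$, one of the two alternatives holds for all small $r$.

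By hypothesis, closeness holds along the given sequence $\lambda_i^{-1}$, so it holds for all small scales. Any other tangent flow is a limit along some $r_j\to0$, and is therefore within $2\eps$ of $\mathcal{C}$; by the key step it is a multiplicity-one half-cylinder. Here $\mathcal{C}$ is closed under limits of rotations, which gives uniqueness up to rotation of the axis. The main obstacle is the key step: the isolation statement \eqref{isolation_lemma} from \cite{CIM} is stated for the cylinder in $\RR^{n+1}$. We must check that doubling preserves shrinker structure and area bounds (which \cite{Edelen} provides) and that the $d_{-1}$ comparison survives doubling, which is exactly what \eqref{choice_basis} is designed for.
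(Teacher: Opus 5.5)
Your proposal is correct and follows essentially the paper's route. Both arguments use Lemma \ref{lem:quanrigid} with $\eps\ll\kappa$, apply the isolation \eqref{isolation_lemma} after doubling with the symmetric basis \eqref{choice_basis}, and propagate half-cylindrical closeness across overlapping dyadic windows; the paper does the propagation by induction over $r_j=2^{-j}r_0$ from one good scale, via the triangle inequality $d_{-1}(\nu^j_{-1},\nu^{j+1}_{-1})<\kappa/4$. The one flaw is your appeal to continuity of $r\mapsto\mu^{X_0,r}$, which fails for general Brakke flows since mass can jump down. It is also unnecessary: closeness of $\mu^{X_0,r}_t$ to a half-cylinder for all $t\in[-4,-1]$ already gives closeness at every scale in $[r,2r]$, so the set of good scales spreads from $r$ to $[r/2,2r]$ and hence covers all small scales without any continuity.
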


\begin{proof}
We will adapt the proof of \cite[Theorem 0.2]{CIM} to our setting. Let $\delta>0$ and $r_0>0$ be the constants from Lemma \ref{lem:quanrigid} applied with $\eps=\kappa/8$. By assumption, after decreasing $r_0$ we can arrange that $\mu_t^{X_0,r_0}$  satisfies \eqref{close_to_shrinker} with $\{\nu^0_t\}_{t\leq 0}$ a multiplicity-one shrinking half-cylinder and that \eqref{density_drop} holds for all $r\leq r_0$. Considering the dyadic scales $r_j=2^{-j}r_0$, the lemma then gives us self-similarly shrinking free boundary integral Brakke flows $\{\nu^j_t\}_{t\leq 0}$ in  $\RR^{n+1}_+$, such that 
\begin{equation}\label{close_to_shrinker_again}
d_t(\mu^{X_0,r_j}_t,\nu^j_t)< \kappa/8\qquad \forall t\in[-4,-1].
\end{equation}
In particular, by the triangle inequality this yields $d_{-1}(\nu^j_{-1},\nu^{j+1}_{-1})< \kappa/4$ for all $j$. After doubling (c.f. \cite[Proposition 3.1]{Edelen}), since we have arrange that \eqref{choice_basis} holds, we still get
\begin{equation}
d_{-1}\left(({\nu}^{j}_{-1})',({\nu}^{j+1}_{-1})'\right)< \kappa\qquad \forall j\geq 0,
\end{equation}
so \eqref{isolation_lemma} yields that all the $\{\nu^j_t\}_{t\leq 0}$ are multiplicity-one shrinking half-cylinders. In light of \eqref{isolation_lemma} and \eqref{close_to_shrinker_again}, this implies the assertion.
\end{proof}

\begin{corollary}[uniqueness of half-cylindrical singularities]\label{cor_uni_sing}
If the outer free boundary flow $\{M_t\}_{t\geq 0}$ has a half-cylindrical singularity at $X_0$ (see Definition \ref{def_half_cyl}), then there exists a matching unit-regular, cyclic free boundary integral Brakke flow $\mathcal{M}=\{\mu_t\}_{t\geq t_\ast}$, such that all tangent flows at $X_0$ are multiplicity-one half-cylinders. In particular, $\mathcal{D}_{\lambda_i}(\mathcal{K}-X_0)$ subconverges to a solid shrinking-half cylinder for all $\lambda_i\to \infty$.
\end{corollary}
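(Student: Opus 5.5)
The corollary follows by combining Proposition \ref{prop:matchbrakke} with Proposition \ref{prop:unitan}; only the final sentence about $\mathcal{K}$ needs an additional argument.

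\textbf{Step 1: the matching flow.} Proposition \ref{prop:matchbrakke} gives a unit-regular, cyclic free boundary integral Brakke flow $\mathcal{M}=\{\mu_t\}_{t\geq t_\ast}$. Its support is the outer flow $\{M_t\}_{t\geq t_\ast}$. Along a subsequence of the given $\lambda_i$, one of its tangent flows at $X_0$ is a multiplicity-one shrinking half-cylinder of type \eqref{sol_cyl1} or \eqref{sol_cyl2}.

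\textbf{Step 2: all tangent flows are half-cylinders.} Proposition \ref{prop:unitan} applies to $\mathcal{M}$ and shows that every tangent flow at $X_0$ is a multiplicity-one half-cylinder. Its proof keeps all the comparison shrinkers $\nu^j$ within $\kappa$ of each other at every dyadic scale. Hence the value of $k$ and the type (axis orthogonal or parallel to the boundary) cannot change; only the rotation of the axis may vary. This is the first assertion.

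\textbf{Step 3: convergence of the solid flows.} Take an arbitrary sequence $\lambda_i'\to\infty$.
\begin{enumerate}[(i)]
\item By Edelen's compactness \cite[Proposition 6.2]{Edelen}, after passing to a subsequence the flows $\mathcal{D}_{\lambda_i'}(\mathcal{M}-X_0)$ converge to a tangent flow. By Step 2 this limit is a multiplicity-one half-cylinder.
\item The limit has density one at every point of its support. Local regularity (Brakke/White, in Edelen's free boundary version) then upgrades the convergence to smooth multiplicity-one convergence on compact subsets of $\{t<0\}$, including up to the boundary.
\item Since $\operatorname{spt}\mathcal{M}=\partial\mathcal{K}$, the boundaries $\partial\mathcal{D}_{\lambda_i'}(\mathcal{K}-X_0)$ converge smoothly to the half-cylinder.
\item It remains to check that the solid regions converge to the solid half-cylinder and not to its complement. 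For this I would use the avoidance principle together with the fact that, for $t<0$, the rescaled $K_t$ are nested and shrinking. Alternatively, one can compare with the original sequence $\lambda_i$, for which the solid side is already known from Definition \ref{def_half_cyl}.
\end{enumerate}

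I expect sub-step (iv) of Step 3 to be the main obstacle. My first attempt would be a connectedness argument in $\lambda$. The closeness to half-cylinders holds for all scales $r\leq r_0$, not only dyadic ones, by interpolating between $r_j$ and $r_{j+1}$. So the side of the rescaled $\mathcal{K}$ lying inside the cylinder varies continuously with the scale. Since it is correct along $\lambda_i$, it stays correct for all large $\lambda$.
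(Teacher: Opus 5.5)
Your proposal is correct and follows the paper's route: the paper proves the corollary simply by combining Proposition~\ref{prop:matchbrakke} with Proposition~\ref{prop:unitan}, and your Step~3 spells out the ``in particular'' clause that the paper leaves implicit. For (iv), use the connectedness-in-scale argument: smooth closeness to some half-cylinder at every small scale follows from your (i)--(ii) by contradiction, and the side is locally constant in the scale because the shrinking half-cylinder is invariant under parabolic dilation. Do not use nestedness of the rescaled $K_t$, which is essentially the mean-convex neighborhood theorem and would be circular, since that theorem is proved using this corollary.
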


\begin{proof}
This follows combining Proposition \ref{prop:matchbrakke} and Proposition \ref{prop:unitan}.
\end{proof}

\bigskip

\subsection{Proof of the mean-convex neighborhood theorem}

The crucial ingredient is the classification of ancient $k$-cylindrical flows by Bamler-Lai \cite{BamlerLai2} (see also the earlier work \cite{CHH,CHHW} for the case $k=1$). Let us extract the relevant consequences for our purpose:

\begin{lemma}[ancient $k$-cylindrical flows]\label{std_lemma} The following holds:
\begin{enumerate}[(i)]
\item Ancient $k$-cylindrical flows are convex, $1$-noncollapsed, and smooth at all times prior to possible extinction.  In particular, their regularity scale $R$ and their mean curvature scale $H^{-1}$ are comparable.\label{std_lemma_a}
\item Every nontrivial blowup limit of an ancient $k$-cylindrical flow is an ancient $k'$-cylindrical flow for some $k'\leq k$.\label{std_lemma_b}
\end{enumerate}
\end{lemma}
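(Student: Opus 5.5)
The plan is to read off both items from the Bamler--Lai classification \cite{BamlerLai2,BamlerLai1}. Up to splitting off Euclidean factors, every ancient $k$-cylindrical flow $\mathcal{M}$ is one of four models: a round shrinking sphere $S^{n-k'}$, a translating bowl $\RR^{k''}\times\mathrm{Bowl}^{n-k''}$, an ancient oval, or an oval-bowl (a flow asymptotic to a bowl-type translator in some directions and oval-like in others). So I would first write $\mathcal{M}=\RR^{j}\times\mathcal{M}'$, where $\mathcal{M}'$ is one of these models in $\RR^{n+1-j}$. Every property in \eqref{std_lemma_a} is invariant under taking products with $\RR^j$, so it suffices to check it for each model.

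\textbf{Step 1 (part \eqref{std_lemma_a}).} By the classification, each model is a smooth, strictly convex flow on every time slice before its extinction time, if it has one. The sphere and the bowl are explicit. For the ancient ovals and oval-bowls, Bamler--Lai construct them as limits of convex flows, respectively prove convexity as part of the classification. Next I would use that a smooth compact or complete convex solution which is a limit of mean-convex $\alpha$-noncollapsed flows is noncollapsed. Here I expect to quote that the classified flows are $1$-noncollapsed in the sense of Andrews, with sharp constant $1$ as in \cite{BamlerLai2}, and this is the main citation-dependent point. Given noncollapsing and convexity, the standard local curvature estimates for $\alpha$-noncollapsed flows (Haslhofer--Kleiner \cite{HaslhoferKleiner}) give $|A|\le C H$ and derivative bounds at scale $H^{-1}$. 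Hence the regularity scale $R(X)$ satisfies $c H(X)^{-1}\le R(X)\le C H(X)^{-1}$; the upper bound is trivial since $|A|\ge H/\sqrt n$.

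\textbf{Step 2 (part \eqref{std_lemma_b}).} Let $X_i\in\mathcal{M}$ and $\lambda_i\to\infty$ (or rather $\lambda_i=H(X_i)$ after normalization), and suppose $\mathcal{D}_{\lambda_i}(\mathcal{M}-X_i)$ converges to a nontrivial limit $\mathcal{M}_\infty$. By part \eqref{std_lemma_a} and the compactness theorem for $\alpha$-noncollapsed flows \cite{HaslhoferKleiner}, the limit is an ancient, convex, $1$-noncollapsed, smooth (unit-regular, multiplicity-one) flow. To see that it is $k'$-cylindrical, I would compute its tangent flow at $-\infty$ using Huisken's monotonicity, by comparing entropies. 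The entropy satisfies $\lambda(\mathcal{M}_\infty)\le\lambda(\mathcal{M})=\lambda(\RR^k\times S^{n-k})$. On the other hand, the tangent flow at $-\infty$ of an ancient convex noncollapsed flow is a multiplicity-one generalized cylinder $\RR^{k'}\times S^{n-k'}$, or a plane in the trivial case, which is excluded by nontriviality. Since cylinder entropies increase with $k'$, this forces $k'\le k$. So $\mathcal{M}_\infty$ is ancient $k'$-cylindrical.

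The main obstacle is Step 1's noncollapsing of the oval-bowls, which I would take verbatim from \cite{BamlerLai2}. In Step 2, one must also make sure that blowup limits are taken in a sense under which unit-regularity passes to the limit. This follows from multiplicity one together with local regularity.
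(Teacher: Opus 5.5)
Your proposal is correct and follows essentially the same route as the paper. Part (i) is read off from the Bamler--Lai classification. Part (ii) uses that blowup limits remain noncollapsed, the Haslhofer--Kleiner blowdown theorem (their Theorem 1.14) to identify the tangent flow at $-\infty$ as a generalized cylinder, and the fact that entropy cannot increase under limits to force $k'\le k$. Your additional details fill in what the paper leaves implicit: the local curvature estimate giving $R\sim H^{-1}$, and unit-regularity passing to the limit.
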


\begin{proof}
Assertion \eqref{std_lemma_a} is an immediate corollary of the classification by Bamler-Lai \cite{BamlerLai2}. To prove \eqref{std_lemma_b}, note that any blowup limit is also noncollapsed and asymptotic to a cylinder by \cite[Theorem 1.14]{HaslhoferKleiner}. Since the entropy cannot increase in the limit, we see that $k'\leq k$.
\end{proof}

We first establish the canonical neighborhood property (Theorem \ref{intro:cannghd}), which we restate here in the new case of boundary singularities:

\begin{theorem*}[canonical neighborhoods]\label{prop:cannghd}
If the outer free boundary flow has a half-cylindrical singularity at $X_0=(x_0,t_0)\in \partial\Omega\times \mathbb{R}_+$ as in Definition \ref{def_half_cyl}, then for every $\eps>0$, there exists a $
\delta=\delta(\varepsilon,X_0)>0$ such that for every $X\in \partial \mathcal{K}\cap P(X_0,\delta)$ we have:
\begin{enumerate}[(i)]
\item If $X$ is regular, then $H(X)\neq 0$ and the flow $\mathcal{D}_{|H(X)|^{-1}}(\partial\mathcal{K}-X)$ is $\eps$-close in $C^{\lfloor 1/\eps \rfloor}(P_{1/\eps}(0))$ to an ancient $k'$-cylindrical or half-cylindrical flow for some $k'\leq k$.\label{can_nbd_a}
\item If $X$ is singular, then every tangent flow at $X$ is a multiplicity-one shrinking cylinder or half-cylinder for some $k'\leq k$.\label{can_nbd_b}
\end{enumerate}
In particular, $(\partial \mathcal{K})^{\mathrm{sing}}\cap P(X_0,\delta)$ has dimension at most $k$.
\end{theorem*}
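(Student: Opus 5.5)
We argue by contradiction, following \cite{CHH,CHHW,BamlerLai2}. Fix $\eps>0$ and suppose there are points $X_i=(x_i,t_i)\in\partial\KK$ with $X_i\to X_0$ for which (i) or (ii) fails. We use Corollary \ref{cor_uni_sing} to fix a matching unit-regular, cyclic free boundary Brakke flow $\MM$ on $[t_\ast,\infty)$ whose tangent flows at $X_0$ are all multiplicity-one half-cylinders. We set $r_i$ equal to the regularity scale of $\partial\KK$ at $X_i$ if $X_i$ is regular, and $r_i=|x_i-x_0|+|t_i-t_0|^{1/2}$ if $X_i$ is singular. We then consider the rescaled flows $\MM^i=\mathcal{D}_{1/r_i}(\MM-X_i)$ and $\KK^i=\mathcal{D}_{1/r_i}(\KK-X_i)$.

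The first step shows that $\MM^i$ subconverges to an ancient, unit-regular, cyclic Brakke flow $\MM^\infty$, and that $\MM^\infty$ is either defined in $\RR^{n+1}$ or is a free boundary flow in a half-space. This is decided by the ratio $d(x_i,\partial\Omega)/r_i$: if it tends to infinity, the limit lives in all of $\RR^{n+1}$. Otherwise the rescaled domains converge to a half-space, and we use \cite{Edelen} for compactness together with the preservation of unit-regularity and cyclicity under weak limits and reflections from \cite{Bao}. In the half-space case we double $\MM^\infty$ by reflection to obtain an ancient unit-regular cyclic integral Brakke flow in $\RR^{n+1}$. To see that this flow is $k'$-cylindrical, we pass to its tangent flow at $-\infty$ and compare with the tangent flow of $\MM$ at $X_0$. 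By Proposition \ref{prop:unitan}, every blowdown at all scales between $r_i$ and the scale $|X_i-X_0|$ is close, in the distance \eqref{equ:dv}, to a multiplicity-one (half-)cylinder, possibly with a rotated axis. Monotonicity then pins the density at $-\infty$, and the isolation result \eqref{isolation_lemma} identifies the blowdown of the doubled limit as a multiplicity-one round $\RR^{k}\times S^{n-k}$. Hence the doubled limit is an ancient $k$-cylindrical flow. We also need to show it is nontrivial and not a static plane or multiplicity-two plane. For regular $X_i$ this follows from the choice of $r_i$, which forces $X_\infty=0$ to sit at regularity scale $1$. For singular $X_i$ it follows from the density being greater than $1$ at the limit point.

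The second step applies Lemma \ref{std_lemma}. By part (i), the doubled limit is smooth, convex and noncollapsed, with $R\sim H^{-1}$. Therefore $H(0)\neq0$ on the limit, and since the convergence is smooth near regular points by unit-regularity and local regularity, $H(X_i)\neq 0$ for large $i$. Because $R$ and $H^{-1}$ are comparable, rescaling by $|H(X_i)|^{-1}$ instead of $r_i$ changes the scale only by a bounded factor. This gives $\eps$-closeness in $C^{\lfloor1/\eps\rfloor}(P_{1/\eps})$ to a $k'$-cylindrical flow, or to its restriction to a half-space, which is a contradiction for regular $X_i$. One point needs care here. The half-space limit must be a genuine half-cylindrical flow, meaning the reflection plane is a symmetry plane of the doubled ancient flow. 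This holds because the doubled flow is reflection symmetric by construction. For singular $X_i$, we apply Lemma \ref{std_lemma}(ii) to tangent flows at $X_i$, passing to the limit along a diagonal sequence, and obtain that every tangent flow at $X_i$ is a multiplicity-one shrinking (half-)cylinder with $k'\le k$. The dimension bound for the singular set then follows from (ii) by the standard stratification / Federer dimension reduction for Brakke flows, adapted to the free boundary case by doubling.

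The main obstacle is the first step: showing that every blowup limit at nearby points is \emph{asymptotically} cylindrical with multiplicity one. This requires a careful interplay between the near-$X_0$ scale and the $r_i$ scale, especially near $\partial\Omega$, where the relative position of $x_i$ to the boundary can degenerate at intermediate scales. We would handle this with the quantitative rigidity Lemma \ref{lem:quanrigid} combined with a dyadic chaining as in Proposition \ref{prop:unitan}, applied now at the moving base points $X_i$.
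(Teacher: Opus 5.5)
\textbf{There is a genuine gap in your first step}, where you identify the doubled limit $\hat{\MM}$ as an ancient $k$-cylindrical flow with exactly the same $k$.

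Proposition \ref{prop:unitan} only controls $\MM$ at scales centered at the fixed point $X_0$. Transferred to the moving base points $X_i$, it gives closeness to (half-)cylinders only at scales between roughly $C\|X_i-X_0\|$ and a fixed $r_0$. It says nothing at scales between $r_i$ and $\|X_i-X_0\|$. The blowdown of your limit is determined by scales $Rr_i$ with $R$ fixed and $i\to\infty$, and these may be much smaller than $\|X_i-X_0\|$. So nothing pins that blowdown to $\RR^k\times S^{n-k}$.

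Nor should you expect it to be pinned. The statement allows $k'\le k$, and Lemma \ref{std_lemma}(ii) is needed, precisely because at the regularity scale one may only see a \emph{blowup} of an ancient $k$-cylindrical flow. For instance, if an ancient oval forms at an intermediate scale, then regular points near its extinction point have a round shrinking sphere as regularity-scale limit.

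Your proposed remedy, dyadic chaining via Lemma \ref{lem:quanrigid} at the moving points $X_i$, cannot close this. Chaining requires the drop \eqref{density_drop} to be small at \emph{every} dyadic scale. But $\Theta_{\mathrm{refl}(\partial\Omega)}(\MM,X_i,r)$ must decrease from about the half-cylinder density at scale $r_0$ to $1$ at a regular $X_i$, so there are scales with a definite drop where the chain breaks.

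The paper only claims, and only needs, that $\hat{\MM}$ is an ancient asymptotically cylindrical flow \emph{or a blowup limit thereof}. One way to organize this is a two-scale argument:
\begin{enumerate}[(a)]
\item Let $Z_i$ be the smallest scale such that the flow around $X_i$ is close to (half-)cylinders at all scales in $[Z_i,r_0]$. Then $Z_i\lesssim\|X_i-X_0\|\to 0$ by Proposition \ref{prop:unitan}.
\item The rescalings by $Z_i^{-1}$ converge, after doubling and using \cite{Bao} together with \eqref{isolation_lemma}, to an ancient asymptotically $k$-cylindrical flow.
\item When $r_i\ll Z_i$, the regularity-scale limit is a blowup limit of that flow. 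Hence it is $k'$-cylindrical with $k'\le k$ by Lemma \ref{std_lemma}(ii).
\end{enumerate}

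The same correction is needed in case (ii). Rescaling singular $X_i$ at scale $\|X_i-X_0\|$ only reproduces a translate of the tangent flow at $X_0$ and gives no information about tangent flows at $X_i$. Your ``diagonal sequence'' therefore still rests on the missing step. The paper instead uses \eqref{isolation_lemma} to choose $\lambda_i$ such that $\mathcal{D}_{\lambda_i}(\MM-X_i)$ is not $\eps$-close to any (half-)cylinder at any scale in $[1/i,i]$. It then contradicts this using that the limit is a blowup of an ancient asymptotically cylindrical flow.

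The remaining ingredients of your outline agree with the paper's proof: Edelen's compactness, preservation under limits and reflection from \cite{Bao}, doubling, Lemma \ref{std_lemma}(i) for $H\neq 0$ and comparability of scales, and the dimension bound.
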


\begin{proof}By Corollary \ref{cor_uni_sing} there exists a unit-regular, cyclic, free boundary integral Brakke flow $\mathcal{M}=\{\mu_t\}_{t\geq t_\ast}$ with $\mathrm{spt}(\mathcal{M})=\partial\mathcal{K}\cap\{t\geq t_\ast\}$, and every tangent flow of $\mathcal{M}$ at $X_0$ is a multiplicity-one shrinking half-cylinder given by the boundary of either \eqref{sol_cyl1} or \eqref{sol_cyl2}.\\

Suppose towards a contradiction that \eqref{can_nbd_a} fails for some regular points $X_i \rightarrow X_0$. Consider the regularity scale $R(X_i)$, namely the largest $r>0$, such that $\sup_{\partial\mathcal{K}\cap P(X_i,r)}|A|\leq 1/r$. Observe that $R(X_i)\to 0$ and consider the blowup sequence  $\mathcal{M}^{i}=\mathcal{D}_{R(X_i)^{-1}}(\mathcal{M}-X_i)$.
By Edelen's compactness theorem \cite[Theorem 4.14]{Edelen} a subsequence converges to an ancient integral Brakke flow $\mathcal{M}^\infty$, which is either a flow without boundary defined in $\mathbb{R}^{n+1}$ or a flow with free boundary defined in $\mathbb{R}^{n+1}_+$ that can be doubled (see \cite[Proposition 4.6]{Edelen}) to obtain a flow $(\mathcal{M}^\infty)'$ without boundary in $\mathbb{R}^{n+1}$.
Let $\hat{\mathcal{M}}$ be $\mathcal{M}^\infty$ in the first case and $(\mathcal{M}^\infty)'$ in the second case. The flow $\hat{\mathcal{M}}$ is an ancient unit-regular, cyclic, integral Brakke flow in $\mathbb{R}^{n+1}$ thanks to the preservation under weak limits and reflections from \cite[Theorem 1.2 and Theorem 1.4]{Bao}.
Moreover, since $X_i\to X_0$ and every tangent flow at $X_0$ is a multiplicity-one shrinking half-cylinder, remembering also \eqref{isolation_lemma}, we infer that $\hat{\mathcal{M}}$ is either an ancient asymptotically cylindrical flow or a blowup thereof. Together with Lemma \ref{std_lemma} we conclude that  \eqref{can_nbd_a} actually holds for $X_i$ for $i$ large enough, which gives the desired contradiction.\\

Next, suppose towards a contradiction there are singular points $X_i \to X_0$ with a non-(half)cylindrical tangent flow. Then, by isolation of the cylinder in the space of shrinkers from \eqref{isolation_lemma}, we can find $\lambda_i \to \infty$, such that $\mathcal{M}^i=\mathcal{D}_{\lambda_i}(\MM - X_i)$ is not $\eps$-close to a cylinder or half-cylinder at any scale between $1/i$ and $i$. However, arguing as above we see that $\MM^i$ converges to a blowup of an ancient asymptotically cylindrical flow, which gives a contradiction for large $i$, and thus proves \eqref{can_nbd_b}.\\

Finally, we have to estimate the Hausdorff dimension of the singular set with respect to the parabolic metric
\begin{equation}
d((x,t),(x^{\prime},t^{\prime}))=\max\{|x-x^{\prime}|,|t-t^{\prime}|^{\frac12}\}.
\end{equation}
If the $D$-dimensional Hausdorff measure $\mathcal{H}^D_d$ of $(\partial \mathcal{K})^{\mathrm{sing}}\cap P(X_0,\delta)$ is positive, then considering the $D$-dimensional Hausdorff content
\begin{equation}
\mathcal{H}^{D,\infty}_d(\mathcal{X})=\inf\left\{ \sum_{i=1}^\infty \sup_{X,X'\in\mathcal{U}_i} d(X,X')^D \, : \, \mathcal{X}\subset \bigcup_{i=1}^\infty \mathcal{U}_i \right\},
\end{equation}
we can find $X\in (\partial \mathcal{K})^{\mathrm{sing}}\cap P(X_0,\delta)$ and $r_i\to 0$, such that
\begin{equation}
\lim_{i\to \infty} \frac{ \mathcal{H}^{D,\infty}_d((\partial \mathcal{K})^{\mathrm{sing}}\cap P(X,r_i))}{r_i^D}>0,
\end{equation}
see e.g. \cite[Section 2.3]{EvansGariepy}. Hence, we can find a tangent flow at $X$, whose singular set has parabolic Hausdorff dimension at least $D$ (here we used that limits of singular points are singular thanks to the $\eps$-regularity theorem from \cite{Edelen}). By our classification of tangent flows from item \eqref{can_nbd_b} we must have $D\leq k$, as desired.
\end{proof}

As a consequence of the canonical neighborhood theorem and its proof we can now establish the mean-convex neighborhood (Theorem \ref{thm_mcn}), which we restate here in the following equivalent form:

\begin{theorem*}[mean-convex neighborhoods] If the outer free boundary flow has a cylindrical or half-cylindrical singularity at $X_0=(x_0,t_0)\in \bar{\Omega}\times \mathbb{R}_+$, then there exists a constant $\delta=\delta(X_0)>0$, such that the outer free boundary flow is strictly mean-convex in $P(X_0,\delta)$.
\end{theorem*}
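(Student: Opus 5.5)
Using the canonical neighborhood theorem just proved, I will first show that near $X_0$ every regular point of $\partial\KK$ has $H\neq 0$ with a consistent sign. After that I will turn this infinitesimal mean-convexity into the set-theoretic inclusion $K_{t_2}\cap B(x_0,\delta)\subseteq K_{t_1}\setminus M_{t_1}$, following the scheme of \cite{CHH,CHHW} with the boundary modifications below. Interior cylindrical points are covered by \cite{BamlerLai2,CHH,CHHW}, so I treat $x_0\in\partial\Omega$. Fix $\eps>0$ small and let $\delta=\delta(\eps,X_0)$ be as in the canonical neighborhood theorem. Then every regular $X\in\partial\KK\cap P(X_0,\delta)$ has $H(X)\neq0$, and after rescaling by $|H(X)|^{-1}$ the flow is $\eps$-close to an ancient $k'$-cylindrical flow or a half-cylindrical one. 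These model flows are convex by Lemma \ref{std_lemma}(\ref{std_lemma_a}), so their mean curvature vector points into the region they bound. The first task is to show that this region is $\KK$ and not its complement, i.e.\ that $H$ points into $K_t$ at every regular point of $P(X_0,\delta)$.

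For the sign I argue by connectedness together with the blowup argument. At scales comparable to $\sqrt{t_0-t}$ near $X_0$, Corollary \ref{cor_uni_sing} tells us that $\KK$ is close to the solid half-cylinder \eqref{sol_cyl1} or \eqref{sol_cyl2}. There $H$ points into $K_t$, because the solid cylinder is the side that shrinks. Now suppose there are regular points $X_i\to X_0$ where $H$ points outward. I would blow up at $X_i$ by the regularity scale, exactly as in the proof of the canonical neighborhood theorem. The limit $\hat{\MM}$ comes from the same solid region, and it is asymptotic at $-\infty$ to the solid shrinking cylinder (or its double). Its enclosed region is therefore the convex side, which forces $H$ to point into the limit of $\KK$ and contradicts the choice of $X_i$. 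Hence $\partial\KK\cap P(X_0,\delta)$ is strictly mean-convex at regular points with $H$ pointing into $K_t$. By item (\ref{can_nbd_b}) the singular points are cylindrical or half-cylindrical, and the singular set has parabolic dimension at most $k\le n-1$.

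Next I pass from pointwise to set-theoretic mean-convexity. First, the regular part moves strictly inward with speed $|H|>0$. Second, singular points form a set that is small in the sense of the dimension bound. Third, at each singular point the flow again has the canonical structure, so it moves inward as well: locally the flow looks like a shrinking solid (half-)cylinder, and the region strictly decreases. I would show that for $t_1<t_2$ close to $t_0$, every $x\in K_{t_2}\cap B(x_0,\delta)$ lies in the interior of $K_{t_1}$. Take the arrival time $u(x)$ of the free boundary level set flow. The canonical neighborhoods show that $u$ is continuous near $x_0$ and strictly decreasing along $-\nabla u$, with $|\nabla u|=1/|H|$ at regular points. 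At the boundary, $u$ satisfies the Neumann condition because of the free boundary orthogonality. Then $\{u\ge t_2\}\subset\{u>t_1\}$, which is the claim.

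\emph{Main obstacle.} The delicate step is ruling out points of $\partial\KK$ that are stationary in time. These are points $x$ that lie in $M_{t}$ for an interval of times, or points where regular parts of $M_{t_2}$ touch $M_{t_1}$ on the free boundary $\partial\Omega$. Here I would use the strong maximum principle for the free boundary flow (Hopf lemma at $\partial\Omega$, using that $H>0$ and the Neumann condition). The plan is to show that the smooth regular sheets at times $t_1<t_2$ are strictly nested, first in the interior and then up to $\partial\Omega$. Near singular points I would instead use the closeness to the shrinking solid half-cylinders to get strict nesting quantitatively.
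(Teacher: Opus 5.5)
Your proposal has a genuine gap at the step that carries the theorem.

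\textbf{The gap: the arrival time is assumed, not proved.} Everything in your third paragraph presupposes that the arrival time $u$ exists, i.e.\ that each $x\in B(x_0,\delta)$ lies on $\partial\KK$ for at most one time in $(t_0-\delta^2,t_0+\delta^2)$. Once that is known, the inclusion $K_{t_2}\cap B(x_0,\delta)\subseteq K_{t_1}\setminus M_{t_1}$ is essentially immediate, and $\{u\geq t_2\}\subset\{u>t_1\}$ is a tautology. But the canonical neighborhood theorem does not give single visits, and the mechanism you propose for it does not work:
\begin{enumerate}[(i)]
\item A strong maximum principle/Hopf argument can upgrade weak nesting of regular sheets to strict nesting. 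It cannot produce the nesting in the first place. It also does not address an exit followed by a later re-entry at separated times, which involves no tangential contact.
\item The tangent flow at a nearby singular point only describes the flow backward in time, and at scales that are not uniform in the point. So ``closeness to the shrinking solid half-cylinder'' says nothing about whether $K_t$ regrows around that point after the singular time. That forward control is precisely the two-sided content of the theorem.
\end{enumerate}
Your sign argument in the second paragraph is also only heuristic. The blowup limit at $X_i$ may merely be a blowup of an asymptotically cylindrical flow, possibly with $k'<k$. Deciding which side of it is the limit of $\KK$ requires tracking the region through intermediate scales, which you do not do.

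\textbf{The missing idea: a blowup argument for single visits.} Suppose $x_i\to x_0$ and $(x_i,t_i'),(x_i,t_i)\in\partial\KK$ with $t_i'<t_i\to t_0$. Rescale $\MM$ around $(x_i,t_i)$ by $r_i=\max\big(R(x_i,t_i),\sqrt{t_i-t_i'}\big)$. As in the canonical neighborhood proof, a subsequential limit is an ancient $k'$-cylindrical flow. If it has free boundary, double it, using \cite{Bao} to keep unit-regularity and cyclicity. The limit contains $(0,0)$ and $(0,t_\infty)$ with $t_\infty\in[-1,0]$.
\begin{itemize}
\item If $t_\infty\neq 0$, this contradicts Lemma \ref{std_lemma}: these flows are convex and smooth with $H\neq0$ before extinction, so they sweep through each point at most once.
\item If $t_\infty=0$, then $r_i=R(x_i,t_i)$ for large $i$. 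The convergence is then smooth near the origin with $H\neq 0$, and two support points on one vertical line is again impossible.
\end{itemize}
This single argument treats regular points, singular points and forward times uniformly. It makes $u$ well defined and continuous, with full domain $B(x_0,\delta)\cap\bar\Omega$ by the canonical neighborhoods for $\delta\ll\delta_0$. The side of $\KK$ is then read off from the inward-moving tangent flow at $X_0$, since $\partial\KK$ is a graph over space. So no pointwise sign argument, maximum principle, Hopf lemma or Neumann condition for $u$ is needed.
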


\begin{proof} We start by showing that each point is visited at most once:
\begin{claim}[arrival]\label{claim_arrivial}
There exists a $\delta_0>0$, such that for every $x\in {B}(x_0,\delta_0)$ we have $(x,t)\in\partial \KK$ for at most one $t\in (t_0-\delta_0^2,t_0+\delta_0^2)$. \label{claim_arrival}
\end{claim}

\begin{proof}We argue similarly as in \cite[Claim 7.2]{BamlerLai2}.
Suppose towards a contradiction that there are $x_i\to x_0$, such that $(x_i,t_i),(x_i,t_i')\in\partial \KK$ for two distinct times $t_i'<t_i$ converging to $t_0$. 
Modifying what we did in the above proof of the canonical neighborhood theorem, we now take
\begin{equation}\label{choice_of_ri}
r_i=\max\left(R(x_i,t_i),\sqrt{t_i-t_i^{\prime}}\right).
\end{equation}
After passing to a subsequence we can assume that the flows $\mathcal{M}^i=\mathcal{D}_{1/r_i}\left(\mathcal{M}-\left(x_i, t_i\right)\right)$ converge to a limit $\mathcal{M}^\infty$ and that $\hat{t}_i=r_i^{-2}\left(t_i^{\prime}-t_i\right) \rightarrow t_\infty \in[-1,0]$.
By construction we have $(0, 0),\left(0, t_\infty\right) \in \operatorname{spt} \mathcal{M}^\infty$. 
If $t_\infty \neq 0$, then we can directly obtain a contradiction by taking the double (if necessary) of $\mathcal{M}^{\infty}$ and applying Lemma \ref{std_lemma}.
If $t_{\infty}=0$, then considering \eqref{choice_of_ri} we see that $r_i=R(x_i,t_i)$ for $i$ large. In particular, we have smooth convergence $\mathcal{M}^i \rightarrow \mathcal{M}^{\infty}$ near $( 0, 0 )$,  and hence for large $i$ the flows $\mathcal{M}^i$ have positive mean curvature near $(0,0)$, which yields the desired contradiction with $(0, 0),\left(0, \hat{t}_i\right) \in \operatorname{spt} \mathcal{M}^i$.
\end{proof}
Finishing the proof of the theorem, we get a well-defined arrival time\footnote{As an aside, which is not needed in this section, we can observe that the function $(x,t)\mapsto u(x)-t$ solves the free boundary level set equation.}
\begin{equation}
u: B(x_0,\delta) \cap \bar{\Omega}\to (t_0-\delta_0^2,t_0+\delta_0^2),
\end{equation}
which is a continuous function that satisfies $u(x)=t$ if and only if $(x,t)\in\partial \KK\cap (t_0-\delta_0^2,t_0+\delta_0^2)$. At first, by Claim \ref{claim_arrivial}  this function is only defined on a subset $D\subseteq  B(x_0,\delta) \cap \bar{\Omega}$, but choosing $\delta\ll \delta_0$ small enough, Theorem \ref{intro:cannghd} implies that actually $D= B(x_0,\delta) \cap \bar{\Omega}$. Having established the arrival time of the outer flow, and remembering that the tangent flow at $X_0$ moves inwards, we can conclude that
\begin{equation}
K_{t_2}\cap B(x_0,\delta)\subseteq K_{t_1}\setminus M_{t_1}
\end{equation}
whenever $t_0-\delta^2< t_1 < t_2 <t_0+\delta^2$. This proves the theorem.
\end{proof}

\bigskip

\section{Uniqueness of free boundary flow through singularities}

Let us recall the definition of free boundary subsolutions \cite{GigaSato,Bao}, which will be used throughout this section. Given any open set $X\subseteq\RR^{n+1}$, a family of closed subsets of $X\cap \bar{\Omega}$ is a free boundary subsolution in $X$ if it satisfies the avoidance principle with respect to any smooth compact mean curvature flow in $X\cap \bar{\Omega}$ with free boundary on $X\cap\partial \Omega$.\\

\subsection{A characterization of free boundary level set solutions}
In this subsection, we provide a criterion to detect solutions of the free boundary level set flow. We start with the following basic lemma:

\begin{lemma}[free boundary subsolutions]\label{basic_lemma}
Let $X\subset \mathbb{R}^{n+1}$ be an open set, and suppose $w: X\cap\bar{\Omega} \times[0, T] \rightarrow \mathbb{R}$ is a continuous function.
\begin{enumerate}[(i)]
\item If for every $c\geq 0$ the family $\{w(\cdot,t)\geq c\}_{t\in [0,T]}$ is a free boundary subsolution in $X$, then for every $\alpha\geq 0$ and $c\geq 0$ the family $\{w(\cdot,t)\geq ce^{\alpha t}\}_{t\in[0,T]}$ is also a free boundary subsolution in $X$.\label{basic_lemma_a}
\item\label{basic_lemma_b} If $w$ is smooth with $Dw\neq 0$, and $c\in \mathbb{R}$, then $\{ w(\cdot,t)\geq c\}_{t\in [0,T]}$ is a free boundary subsolution in $X$ if and only if
\begin{equation*}
\left\{
\begin{array}{ll}
w_t- |Dw|\mathrm{div}\left(\frac{Dw}{|Dw|}\right)\leq 0 & \text{on } \{w=c\}, \\
 Dw \cdot \nu_{\partial \Omega}\leq 0 & \text{on } \{w=c\}\cap \partial\Omega.
\end{array}
\right.
\end{equation*}
\end{enumerate}
\end{lemma}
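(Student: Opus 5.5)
For part (i), I would reduce to the case $\alpha$-independent sublevel families by a time-dependent reparametrization of the threshold. Fix $\alpha\ge0$, $c\ge0$, and suppose a smooth compact free boundary flow $\{N_t\}_{t\in[a,b]}$ in $X\cap\bar\Omega$ is disjoint from $\{w(\cdot,a)\ge ce^{\alpha a}\}$ at time $a$. I must show it stays disjoint on $[a,b]$. Suppose not, and let $t^\ast\in(a,b]$ be the first time at which $N_{t}$ touches $\{w(\cdot,t)\ge ce^{\alpha t}\}$; this first time exists by continuity of $w$ and compactness of $N_t$. On $[a,t^\ast]$ the threshold $ce^{\alpha t}$ is nondecreasing, so for every $s\in[a,t^\ast]$ the family $\{w(\cdot,t)\ge ce^{\alpha t}\}$ is contained in $\{w(\cdot,t)\ge ce^{\alpha s}\}$ for $t\ge s$. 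The idea is to compare with the fixed-level family $\{w\ge c'\}$, $c'=ce^{\alpha s}$, restarted at time $s$: partition $[a,t^\ast]$ into short intervals $[s_j,s_{j+1}]$ and argue inductively that $N_t$ stays disjoint from $\{w(\cdot,t)\ge ce^{\alpha s_{j}}\}$ on $[s_j,s_{j+1}]$. The subtlety is that at $s_j$ I only know disjointness from the level $ce^{\alpha s_j}$ up to the gap created by the positive distance: by compactness, $\mathrm{dist}(N_{s_j},\{w(\cdot,s_j)\ge ce^{\alpha s_j}\})>0$ implies disjointness from $\{w(\cdot,s_j)\ge ce^{\alpha s_j}-\eta\}$ for small $\eta$ (using uniform continuity of $w$ on a compact neighborhood of $N$); choosing the mesh so that $ce^{\alpha s_{j+1}}-ce^{\alpha s_j}<\eta$ is not needed in this direction, rather I use the lower level $ce^{\alpha s_j}$ as a valid superset. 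Since the subsolution property at fixed level $ce^{\alpha s_j}\ge0$ gives disjointness on $[s_j,s_{j+1}]$, and the moving set is a subset, induction up to $t^\ast$ contradicts the touching. The only real obstacle is making the first-touching argument rigorous when the touching happens exactly at $t^\ast$; handling it with a strict distance margin (shrink $N$ slightly or use that the avoidance principle in \cite{Bao} is formulated with disjointness preserved, closed sets, compact flows) should suffice.

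For part (ii), I would prove both directions by the standard touching argument. ($\Leftarrow$) If the two inequalities hold and a smooth compact free boundary flow $N_t$ first touches $\{w=c\}$ at $(p,t^\ast)$, then near $p$ the level set $\{w(\cdot,t)=c\}$ is a smooth hypersurface (as $Dw\ne0$) with inward normal $Dw/|Dw|$ and normal velocity $-w_t/|Dw|$, and its mean curvature relative to this normal is $-\mathrm{div}(Dw/|Dw|)$; the first inequality says the superlevel set moves inward no faster than by mean curvature, i.e.\ it is a classical subsolution. At an interior touching point the strong maximum principle comparison of normal speeds and curvatures gives the contradiction (after the usual perturbation: replace $c$ by $c+\eps$ or add a small $\eps t$ to get strict inequalities, then let $\eps\to0$). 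At a boundary touching point $p\in\partial\Omega$, $N_t$ meets $\partial\Omega$ orthogonally, so its normal is tangent to $\partial\Omega$, while $Dw\cdot\nu_{\partial\Omega}\le0$ means the superlevel set meets $\partial\Omega$ at an angle at most $90^\circ$ on the correct side; by the Hopf boundary lemma type argument (again with a strict perturbation) tangential touching at the boundary is excluded. ($\Rightarrow$) Conversely, if either inequality fails strictly at some point, construct a small smooth free boundary barrier: at an interior point use a small shrinking sphere or a graph over the tangent plane with curvature chosen between the two, and at a boundary point use a small free boundary hemispherical-type barrier tilted so it meets $\partial\Omega$ orthogonally but crosses the level set, which violates avoidance. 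I expect the boundary case of ($\Leftarrow$), handling non-strict inequality at the boundary point, to be the main difficulty, addressed via the strict perturbation and limit.
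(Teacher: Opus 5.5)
Part (i) is correct and the mirror image of the paper's argument, and it needs none of your scaffolding. Part (ii) follows the paper's route, but your geometric reading of the interior inequality has the wrong sign and should be fixed.

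\textbf{Part (i).} The paper argues by contradiction. If $\{w\ge ce^{\alpha t}\}$ is first hit at time $\hat t$, then so is the fixed-level family $\{w\ge \hat c\}$ with $\hat c=ce^{\alpha\hat t}$. This holds because $\{w(\cdot,t)\ge\hat c\}\subseteq\{w(\cdot,t)\ge ce^{\alpha t}\}$ for $t\le\hat t$, with equality at $t=\hat t$. You instead compare with the level at the starting time, and this closes the argument in one step. The family $\{w\ge ce^{\alpha a}\}$ is a subsolution, since $ce^{\alpha a}\ge 0$. It coincides with $\{w\ge ce^{\alpha t}\}$ at $t=a$ and contains it for all $t\in[a,b]$, since $c,\alpha\ge 0$. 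So disjointness at time $a$ propagates directly. The partition into intervals, the first touching time and the distance margin are superfluous, and there is no remaining obstacle at $t^\ast$.

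\textbf{Part (ii).} The paper identifies the outward normal $-Dw/|Dw|$ and outward speed $w_t/|Dw|$ of $\{w\ge c\}$ and invokes the classical maximum principle, as you do. Fix the orientation, though.
\begin{itemize}
\item The first inequality says that $\{w\ge c\}$ moves inward \emph{at least} as fast as mean curvature, i.e.\ $-w_t/|Dw|\ge-\mathrm{div}(Dw/|Dw|)$. It does not say ``no faster''; with your reading the comparison would run the wrong way.
\item The second inequality says the interior angle of $\{w\ge c\}$ with $\partial\Omega$ is at least $90^\circ$.
\end{itemize}

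Your strictness perturbation is not available as stated. The hypothesis controls only the single level $\{w=c\}$, so shifting to $c+\varepsilon$ gives you nothing. It is simpler to avoid perturbing altogether:
\begin{itemize}
\item At a first interior touching point, apply the strong parabolic maximum principle to the nonnegative difference of the two local graphs. It vanishes there but is positive at earlier times, a contradiction.
\item At a boundary point with $Dw\cdot\nu_{\partial\Omega}<0$, the orthogonal hypersurface $N_t$ enters the interior of $\{w\ge c\}$ transversally. That is impossible at a first touching time.
\item At a boundary point with $Dw\cdot\nu_{\partial\Omega}=0$, the two hypersurfaces are tangent, and the Hopf lemma applied to the same difference gives the contradiction.
\end{itemize}

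For the converse at a boundary point, your hemispherical barrier does not work. A convex hemispherical cap lying in the complement has mean curvature pointing away from $\{w\ge c\}$, so it retreats. A barrier that already crosses $\{w=c\}$ at the start violates nothing. You need a compact free boundary hypersurface that starts disjoint from $\{w\ge c\}$ and whose mean curvature vector near $\partial\Omega$ points towards $\{w\ge c\}$. Then its free boundary slides along $\partial\Omega$ into the acute corner where $Dw\cdot\nu_{\partial\Omega}>0$. The paper leaves these boundary details to the classical maximum principle as well.
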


\begin{proof}Assertion \eqref{basic_lemma_a} follows directly from the definition of free boundary subsolutions observing that if $\{w(\cdot,t)\geq ce^{\alpha t}\}_{t\in[0,T]}$ violates the avoidance principle at some $\hat{t}\leq T$, then so does $\{w(\cdot,t)\geq \hat{c}\}_{t\in [0,T]}$, where $\hat{c}=c\exp(\alpha \hat{t})$.\\
In part \eqref{basic_lemma_b} the hypersurface $M_t=\{ w(\cdot,t)= c\}$ is smooth and its outwards unit normal and outwards speed are given by $\nu=-Dw/|Dw|$ and $s=w_t/|Dw|$.  Since by the classical maximum principle we have avoidance if and only if $s\leq -\textrm{div}(\nu)$ on $M_t$ and $\nu\cdot \nu_{\partial \Omega}\geq 0$ on $M_t\cap \partial\Omega$, this concludes the proof of the lemma.
\end{proof}

We can now detect solutions via the following characterization:

\begin{theorem}[characterization of level set solutions]\label{thm:hwcpam15}
Let $Y,Z\subset\mathbb{R}^{n+1}$ be bounded open sets. Suppose $\{ M_t\}_{t\in [0,T]}$ is a free boundary subsolution of compact sets contained in $(Y\cup Z)\cap\bar{\Omega}$, and suppose that there is a continuous function $w: (\overline{Y\cup Z})\cap\bar{\Omega}\times [0,T] \rightarrow \mathbb{R}$, such that:
\begin{enumerate}[(i)]
\item\label{assumpt_i} $w(x, t)=0$ if and only if $x \in M_t$.
\item\label{assumpt_ii} For any $c \ge 0$, 
$\{w(\cdot, t)\ge c\}\cap Y$
and
$\{ w(\cdot, t)\le -c\}\cap Y$
define free boundary subsolutions in $Y$.  
\item\label{assumpt_iii} On $\bar{Z}\cap\bar{\Omega}$ the function $w$ is smooth with $Dw\neq 0$, and on
 $Z\cap\partial \Omega$ we have $D w \cdot \nu_{\partial \Omega}\leq 0$ when $w\geq 0$ and $D w \cdot \nu_{\partial \Omega}\geq 0$ when $w\leq 0$. 
\end{enumerate}
Then, $\{M_t\}_{t\in[0,T]}$ is the free boundary level set flow of $M_0$ in $\bar{\Omega}$.
\end{theorem}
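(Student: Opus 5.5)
Since the free boundary level set flow $F_t(M_0)$ is the maximal free boundary subsolution starting from $M_0$, and $\{M_t\}$ is assumed to be a subsolution with the same initial data, we get $M_t\subseteq F_t(M_0)$ for free. The whole task is the reverse inclusion $F_t(M_0)\subseteq M_t$. I will follow the strategy of Hershkovits-White. The idea is to trap $F_t(M_0)$ between the two families
\begin{equation*}
A^c_t=\{w(\cdot,t)\geq c e^{\alpha t}\},\qquad B^c_t=\{w(\cdot,t)\leq -c e^{\alpha t}\},\qquad c>0,
\end{equation*}
show that $F_t(M_0)$ avoids both for every $c>0$, and then let $c\to 0$. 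Assumption \eqref{assumpt_i} then gives $F_t(M_0)\subseteq\{w=0\}=M_t$.

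\textbf{Step 1: the region $Z$.} Here $w$ is smooth with $Dw\neq 0$, so I will compute directly with Lemma \ref{basic_lemma}\eqref{basic_lemma_b}. Set $\tilde w=e^{-\alpha t}w$; then $\{w\geq ce^{\alpha t}\}=\{\tilde w\geq c\}$. On $\bar Z\cap\bar\Omega\times[0,T]$, which is compact, the quantity $w_t-|Dw|\,\mathrm{div}(Dw/|Dw|)$ is bounded, while $|w|$ is bounded below by $c e^{\alpha t}$ on the relevant level sets. Hence for fixed $c>0$ and $\alpha$ large,
\begin{equation*}
\tilde w_t-|D\tilde w|\,\mathrm{div}\Big(\frac{D\tilde w}{|D\tilde w|}\Big)=e^{-\alpha t}\big(w_t-|Dw|\,\mathrm{div}(\cdots)\big)-\alpha \tilde w<0 \quad\text{on }\{\tilde w=c\}.
\end{equation*}
This strict inequality is what I need. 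The boundary condition $D\tilde w\cdot\nu_{\partial\Omega}\leq 0$ on $\{\tilde w=c\}\cap\partial\Omega$ is exactly the sign hypothesis in \eqref{assumpt_iii}, since there $w>0$. The case $\{w\leq -ce^{\alpha t}\}$ is symmetric. A difficulty is that $\alpha$ depends on $c$. To avoid this I will instead use that the interior part $M_t\cap Z$ is an honest smooth free boundary flow. Indeed, $M_t$ is a subsolution, and where it is a smooth level set the subsolution property forces equality, so the equation holds on $\{w=0\}\cap Z$. By continuity the defect $w_t-|Dw|\,\mathrm{div}(\cdots)$ is $O(|w|)$ near $\{w=0\}$, and this $O(|w|)$ bound is what allows a uniform $\alpha$.

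\textbf{Step 2: the region $Y$.} By \eqref{assumpt_ii} and Lemma \ref{basic_lemma}\eqref{basic_lemma_a}, the sets $A^c_t\cap Y$ and $B^c_t\cap Y$ are subsolutions in $Y$ for every $\alpha\geq 0$. So the same large $\alpha$ chosen in Step 1 works there as well.

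\textbf{Step 3: gluing, and the main obstacle.} I must show that $F_t(M_0)$ stays disjoint from $A^c_t$ and $B^c_t$ for all $t$. At $t=0$ they are disjoint, because $w(\cdot,0)=0$ exactly on $M_0$. Suppose a first contact time $t_1$ exists. The contact point lies either in $Y$ or in $Z$, since everything stays inside $(Y\cup Z)\cap\bar\Omega$; this follows because $M_t$ is contained there and $F_t$ is confined near $M_t$ at least until first contact. I then intend to apply the avoidance principle for free boundary subsolutions against the locally defined subsolution in the corresponding region. In $Z$ the comparison surface is smooth, with strict inequality and the correct Neumann sign. Combined with the characterization of $F_t$ as the largest subsolution, local avoidance at the touching point yields a contradiction. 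This is where the boundary version genuinely enters: at a contact point on $\partial\Omega$ one needs the Hopf-type boundary argument, which is exactly what the sign $Dw\cdot\nu_{\partial\Omega}\leq0$ is there to provide.

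The step I expect to be hardest is the localization. Avoidance between the global flow $F_t(M_0)$ and a family that is only a subsolution in the open set $Y$ (or $Z$) must be justified using the "biggest flow" and "locality" properties of free boundary subsolutions from \cite{Bao}. My first attempt will be to show that the complement-type set $\bar\Omega\setminus(\mathrm{Int}A^c_t\cup\mathrm{Int}B^c_t)$ restricted near $F_t$ is a subsolution, by checking the subsolution property locally. Once that is done, maximality of $F_t$ combined with letting $c\to0$ concludes the argument.
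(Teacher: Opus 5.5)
Your Steps 1 and 2 match the paper. This means the bound $|\Phi[w]|\le\alpha|w|$ on $\bar Z\cap\bar\Omega$ obtained from $\Phi[w]=0$ on $\{w=0\}\cap Z$, the rescaling $\tilde w=e^{-\alpha t}w$, Lemma~\ref{basic_lemma}(b) in $Z$, and (ii) together with Lemma~\ref{basic_lemma}(a) in $Y$.

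The gap is Step 3, where the conclusion $F_t(M_0)\subseteq M_t$ is actually drawn. A family that is a subsolution only in $Y$, or only in $Z$, cannot be compared with the global flow $F_t(M_0)$ by a first-contact argument:
\begin{itemize}
\item At a contact point in $Y$ both families are merely closed sets, and you have no avoidance principle for two weak flows that are subsolutions only in an open set.
\item In $Z$ the surface $\{\tilde w=c\}\cap Z$ is neither compact nor an exact free boundary flow, so the defining property of $F_t(M_0)$ cannot be tested against it.
\item Your claim that $F_t(M_0)$ stays in $Y\cup Z$ ``until first contact'' is asserted rather than proved, and $w$ is not even defined outside $\overline{Y\cup Z}$.
\end{itemize}
Your fallback goes in the wrong direction. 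Showing that $\bar\Omega\setminus(\mathrm{Int}A^c_t\cup\mathrm{Int}B^c_t)$ is a subsolution and invoking maximality only places that set inside a level set flow, and that flow starts from a thickening of $M_0$, not from $M_0$. Maximality can never bound $F_t(M_0)$ from above.

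The missing idea is to build a single \emph{global} barrier.
\begin{enumerate}
\item By locality, $\{\tilde w\ge c\}$ and $\{\tilde w\le -c\}$ are subsolutions in $Y\cup Z$, hence so is their union $\{|\tilde w|\ge c\}$.
\item Let $\eta$ be the minimum of $|\tilde w|$ on $(\partial(\overline{Y\cup Z})\cap\bar\Omega)\times[0,T]$. This is positive by (i), because $M_t$ is a compact subset of the open set $Y\cup Z$.
\item For $0<c<\eta$, continuity gives $|\tilde w|\ge c$ on a neighbourhood of $\partial(\overline{Y\cup Z})$ inside $\overline{Y\cup Z}$. Hence $\{|\tilde w(\cdot,t)|\ge c\}\cup(\bar\Omega\setminus\overline{Y\cup Z})$ contains a full neighbourhood of that boundary and is a free boundary subsolution in all of $\bar\Omega$. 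It is disjoint from $M_0$ at $t=0$.
\item Global avoidance with $F_t(M_0)$ then gives $F_t(M_0)\subseteq\{|\tilde w(\cdot,t)|<c\}$ for every small $c>0$, hence $F_t(M_0)\subseteq M_t$.
\end{enumerate}
This automatically keeps $F_t(M_0)$ inside $Y\cup Z$. No separate Hopf-type argument at boundary contact points is needed, since the Neumann sign from (iii) is already absorbed into Lemma~\ref{basic_lemma}(b) in $Z$.
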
 

\begin{proof}
We will generalize the argument from \cite[Theorem 3.9]{HershkovitsWhite} to our setting. Let us abbreviate
\begin{equation}
\Phi[w]=w_t-|D w| \textrm{div}\left(\frac{D w}{|D w|}\right),\qquad \phi[w]=Dw \cdot \nu_{\partial \Omega}.
\end{equation}
Applying Lemma \ref{basic_lemma} for $\pm w$ yields $\Phi[w]=0$ on $\{w=0\}\cap Z$. Thus, by compactness there is some $\alpha<\infty$, such that $|\Phi[w]|\leq \alpha|w|$ on $Z\cap\bar{\Omega}$. Consider the function $\tilde{w}=e^{-\alpha t} w$, and observe that
\begin{equation}
\Phi[ \tilde{w}] \leq 0 \textrm{ on } \{\tilde{w} \geq 0\}\cap Z,\quad \phi[ \tilde{w}] \leq 0 \textrm{ on } \{\tilde{w} \geq 0\}\cap Z\cap \partial\Omega,
\end{equation}
where we also used assumption \eqref{assumpt_iii}.
Thus, again by Lemma \ref{basic_lemma}, for any $c\geq 0$ the family $\{\tilde{w}(\cdot,t)\geq c\}_{t\in [0,T]}$ is a free boundary subsolution in $Z$. Moreover, assumption \eqref{assumpt_ii} and the lemma tell us that this family is also a free boundary subsolution in $Y$. It follows that $\{\tilde{w}(\cdot,t)\geq c\}_{t\in [0,T]}$ is a free boundary subsolution in $Y\cup Z$. The same holds for $-\tilde{w}$. Since the union of subsolutions is a subsolution, we thus infer that
\begin{equation}
\{|\tilde{w}(\cdot,t)|\geq c\}_{t\in [0,T]} \textrm{ is a free boundary subsolution in } Y\cup Z.
\end{equation}
Now, let $\eta$ be the minimum value of $|\tilde{w}|$ on $\partial(\overline{Y \cup Z})\cap\bar{\Omega} \times[0, T]$, and observe that $\eta>0$ thanks to assumption \eqref{assumpt_i}.
It follows that for $0<c<\eta$ the family
\begin{equation}\label{eq_complement}
\bar{\Omega}\setminus\{|\tilde{w}(\cdot, t)|<c\}= \{|\tilde{w}(\cdot, t)|\geq c\}\cup \bar{\Omega}\setminus(\overline{Y\cup Z})
\end{equation}
is a free boundary subsolution in $\bar{\Omega}$. 
Since the free boundary level set flow $F_t(M_0)$ and the set from \eqref{eq_complement} are disjoint at time $0$, they remain disjoint. As this is true for all small $c>0$, this shows that
\begin{equation}
F_t(M_0) \subseteq\{w(\cdot, t)=0\}=M_t\qquad \forall t\in[0,T].
\end{equation}
Since $F_t(M_0)$ is the maximal free boundary subsolution in $\bar{\Omega}$ starting at $M_0$, we conclude that $M_t=F_t(M_0)$ for all $t\in [0,T]$, as desired.
\end{proof}

\bigskip

\subsection{Proof of the nonfattening theorem}

In this subsection, we prove the nonfattening theorem for free boundary flows with mean-convex singularities. We start with the following technical lemma:

\begin{lemma}[cutoff function]\label{lem:fbcut}
Given any open sets $U\Subset U'\Subset \RR^{n+1}$, there exists a smooth function $\phi: \mathbb{R}^{n+1} \rightarrow[0,1]$, such that $\phi|_{\bar{U}} \equiv 1$, $\spt\phi\subset U'$, and $D \phi\cdot \nu_{\partial \Omega}=0$ on the boundary $\partial \Omega$. 
\end{lemma}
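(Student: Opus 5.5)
The plan is to take a standard cutoff and make it constant along the normal direction near $\partial\Omega$, using the nearest-point projection onto $\partial\Omega$.

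Since $\Omega$ is a smooth convex domain, $\partial\Omega$ is a smooth embedded hypersurface. Hence there is a tubular neighborhood $T_\rho=\{x:\operatorname{dist}(x,\partial\Omega)<\rho\}$ on which the nearest-point projection $\pi:T_\rho\to\partial\Omega$ is smooth. If $\partial\Omega$ is noncompact, $\rho$ need only be uniform on a neighborhood of $\bar U'$, which is compact. In this neighborhood, the signed distance $d$ is smooth with $Dd=\nu_{\partial\Omega}$ on $\partial\Omega$. Moreover, $\pi$ is constant along normal lines, so $D\pi(x)\,\nu_{\partial\Omega}(\pi(x))=0$.

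\textbf{Step 1.} Choose intermediate open sets $U\Subset U_1\Subset U_2\Subset U'$. Take a standard mollified bump $\psi\in C^\infty_c(U_2)$ with values in $[0,1]$ and $\psi\equiv1$ on $\bar U_1$.

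\textbf{Step 2.} Define $\tilde\psi=\psi\circ\pi$ on $T_\rho$. This function satisfies $D\tilde\psi\cdot\nu_{\partial\Omega}=0$ on $\partial\Omega$, because $\pi$ is invariant along normal lines there.

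\textbf{Step 3.} Fix a smooth function $\chi(s)$ of the distance, with $\chi\equiv1$ for $|s|\le\rho/3$ and $\chi\equiv0$ for $|s|\ge2\rho/3$. Set
\begin{equation*}
\phi=\chi(d)\,(\psi\circ\pi)+\bigl(1-\chi(d)\bigr)\,\psi .
\end{equation*}
Outside $T_\rho$ this is simply $\psi$. On $\{|d|<\rho/3\}$ it equals $\psi\circ\pi$, so the Neumann condition holds on $\partial\Omega$. Its values lie in $[0,1]$ because it is a convex combination of values in $[0,1]$, and it is smooth.

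\textbf{Step 4.} Check $\phi\equiv1$ on $\bar U$ and $\operatorname{spt}\phi\subset U'$. The only issue is that $\psi\circ\pi$ at a point $x$ depends on the value of $\psi$ at $\pi(x)$, which lies within distance $|d(x)|\le\rho$ of $x$. So shrink $\rho$ to be smaller than both $\operatorname{dist}(\bar U,\partial U_1)$ and $\operatorname{dist}(\bar U_2,\partial U')$. Then for $x\in\bar U$ we have $\pi(x)\in U_1$, so $\psi(\pi(x))=1$. And if $x\notin U'$, then $\pi(x)\notin \bar U_2$, so $\psi(\pi(x))=0$. Hence $\phi=1$ on $\bar U$ and $\phi=0$ off a compact subset of $U'$.

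The main obstacle is ensuring a uniform tubular neighborhood with smooth projection. This is standard for a smooth hypersurface restricted to a compact region, and it is the only point where the smoothness of $\partial\Omega$ enters. Convexity is not needed.
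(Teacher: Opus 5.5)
Your proposal is correct and uses essentially the paper's construction: the interpolation $\phi=\eta(d)\,(\psi\circ\pi)+(1-\eta(d))\,\psi$ between a standard bump $\psi$, built with intermediate open sets, and its composition with the nearest-point projection onto $\partial\Omega$. Your Step 4 simply makes explicit the paper's ``for $\eps$ small enough'', by taking $\rho$ below $\operatorname{dist}(\bar U,\partial U_1)$ and $\operatorname{dist}(\bar U_2,\partial U')$ and noting that smoothness of $\pi$ and $d$ is only needed on a compact region near $\bar U'$.
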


\begin{proof}Choose open sets $V$ and $V'$, such that $U\Subset V\Subset V'\Subset U'$.
By Urysohn's lemma, there exists a smooth function $\psi: \mathbb{R}^{n+1} \rightarrow[0,1]$, such that $\psi|_{\bar{V}} \equiv 1$ and $\spt\psi\subset V'$.
To modify our function near $\partial\Omega$ to arrange the free boundary condition, we consider the signed distance
\begin{equation}
d(x)=\left\{
\begin{array}{rl} 
d(x,\partial \Omega) & x\in \Omega,\\
-d(x,\partial \Omega) & x\in \mathbb{R}^{n+1}\setminus \Omega.
\end{array}
\right.
\end{equation}
and the nearest point projection $\pi(x)\in \partial \Omega$, which are both smooth in a neighborhood of $\partial \Omega$. Given $\eps>0$, let $\eta:\RR\to [0,1]$ be a smooth function, such that $\eta|_{[-\eps,\eps]}\equiv 1$ and $\mathrm{spt}(\eta) \subset [-2\eps,2\eps]$, and set
\begin{equation}
\phi(x) = \eta(d(x))\,\psi(\pi(x)) + (1-\eta(d(x)))\,\psi(x).
\end{equation}
For $\eps$ small enough this function has all the desired properties.
\end{proof}

We will now prove Theorem \ref{thm_nonfatt}, which we restate in the following equivalent form:

\begin{theorem*}[nonfattening for free boundary flows]
If $0<T\leq T_{\mathrm{disc}}$, and all singularities of the outer free boundary flow at time $T$ have a mean-convex or mean-concave neighborhood, then $T<T_{\mathrm{disc}}$.
\end{theorem*}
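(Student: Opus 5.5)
We follow the strategy of \cite[Theorem 1.3]{HershkovitsWhite}, using Theorem \ref{thm:hwcpam15} on a slightly longer time interval $[0,T+\tau]$. It suffices to show that for some $\tau>0$ the outer flow $\{M_t\}_{t\in[0,T+\tau]}$ equals $F_t(M)$. The same argument with $K$ and $K'$ interchanged gives this for the inner flow $\{M'_t\}$, and hence $T<T_{\mathrm{disc}}$. Recall from \cite{Bao} that $\{M_t\}$ is a free boundary subsolution, that $M_t=F_t(M)$ for $t<T$, and hence also at $t=T$ by continuity, since $T\le T_{\mathrm{disc}}$. We may also restart the flows at a time slightly before $T$.

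\textbf{Step 1 (covering the singular set).} The singular set of $\partial\KK$ at time $T$ is compact. By hypothesis, each singular point $X_0=(x_0,T)$ has a parabolic neighborhood $P(X_0,\delta)$ in which the flow is mean-convex, or mean-concave after swapping $K$ and $K'$. We cover this set by finitely many balls $B(x_j,\delta_j/2)$ and let $Y$ be their union, taken with a small time window around $T$. On each piece we have a continuous arrival time function $u_j$: either $u_j(x)=t$ iff $(x,t)\in\partial\KK$, or the analogous statement for $K'$, in the mean-concave case. We set $w=\pm(u_j(x)-t)$ locally, with the sign chosen so that $w>0$ in the interior of $K_t$. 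The functions $\pm(u-t)$ solve the free boundary level set equation, as in the footnote of the proof of Theorem \ref{thm_mcn}. Since the flow is monotone there, the sets $\{w\ge c\}$ and $\{w\le -c\}$ are time-translates of the flow, so they are free boundary subsolutions in $Y$. Near overlapping balls with opposite orientation we can shrink the balls, because near a mean-convex singular point the flow is strictly mean-convex in the whole neighborhood, and therefore the orientation is locally constant.

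\textbf{Step 2 (regular part and interpolation).} Outside a neighborhood of the singular set, and for times in $[T-\tau,T+\tau]$, the flow is smooth with free boundary, by the local regularity of \cite{Edelen} and upper semicontinuity of the singular set. Let $Z$ be a tubular neighborhood of this regular part and $d$ the signed distance to $M_t$, positive inside $K_t$. Then $d$ is smooth with $|Dd|=1$ on $Z$. On $\partial\Omega$ the normal derivative $Dd\cdot\nu_{\partial\Omega}$ has the favourable sign required in Theorem \ref{thm:hwcpam15}\eqref{assumpt_iii}: $M_t$ meets $\partial\Omega$ orthogonally and $\Omega$ is convex, so the nearest-point geodesics from points of $\partial\Omega$ bend in the right direction. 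To glue the two pieces, take $U\Subset U'\subset Y$ around the singular set and the cutoff $\phi$ from Lemma \ref{lem:fbcut}, and define
\begin{equation*}
w=\phi\,(u-t)+(1-\phi)\,f(d),
\end{equation*}
with $f$ an increasing function chosen so that the two pieces are comparable. On the overlap $U'\setminus U$ the flow is smooth and $D(u-t)\ne 0$, so $Dw\ne0$ there. We then replace $Y$ by $U$, or by $U'$ with suitable bookkeeping. The condition $D\phi\cdot\nu_{\partial\Omega}=0$ ensures that the boundary normal derivative of $w$ is a convex combination of the normal derivatives of the two pieces, so its sign is preserved. Assumption \eqref{assumpt_i} holds because $u=t$ exactly on $M_t$ and $d=0$ exactly on $M_t$, and the two pieces have the same sign off $M_t$.

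\textbf{Step 3 (conclusion), and the main obstacle.} Theorem \ref{thm:hwcpam15} now applies on $[T-\tau,T+\tau]$ and gives $M_t=F_t(M)$ there, and likewise $M'_t=F_t(M)$. Hence $T<T_{\mathrm{disc}}$. We expect the main obstacle to be Step 2 near $\partial\Omega$. There we must verify that the signed distance to a free boundary hypersurface in a convex domain really satisfies the sign conditions in \eqref{assumpt_iii}, which requires a careful second-variation or projection argument at $\partial M_t$. We must also keep $Dw\ne0$ across the cutoff region. If the naive interpolation fails, we would first try to rescale $d$ by a local speed factor, i.e.\ use $f(d)=d/|D(u-t)|$ near the overlap, so that the two gradients are nearly parallel and of comparable size.
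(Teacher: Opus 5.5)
Your outline follows the paper's proof: arrival time $\pm(u-t)$ on the mean-convex and mean-concave pieces, signed distance $d$ on the regular part, gluing with the Neumann cutoff of Lemma \ref{lem:fbcut}, then Theorem \ref{thm:hwcpam15}. But the step you flag as the main obstacle does not just need checking; it fails, because the boundary sign of $d$ is the unfavourable one. Let $p\in\partial\Omega$ with $\pm d(p)>0$, and let $q\in M_t$ be its nearest point. Then $Dd(p)=\pm(p-q)/|p-q|$. Convexity ($q$ lies in the supporting half-space of $\Omega$ at $p$) gives $(p-q)\cdot\nu_{\partial\Omega}(p)\ge 0$. Hence $\pm Dd\cdot\nu_{\partial\Omega}\ge 0$ on $\{\pm d\ge 0\}$, strictly if $\Omega$ is strictly convex. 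Condition (iii) of Theorem \ref{thm:hwcpam15}, and Lemma \ref{basic_lemma}(ii) behind it, require the reverse inequalities.

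This is not a sign-convention issue. Take the static diameter $\{x_2=0\}$ in the unit disk. Then $d=x_2$ and $Dd\cdot\nu_{\partial\Omega}=x_2$ on the circle. The parallel set $\{x_2\ge c\}$ is not a free boundary subsolution: the arc of $\{|x-(0,L)|=\sqrt{L^2-1}\}$ inside the disk meets the circle orthogonally at height $1/L$. For $c$ slightly above $1/L$, its endpoints slide up the circle into the cap under the flow.

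So your intuition about the nearest-point segments is reversed. The convex-combination property from $D\phi\cdot\nu_{\partial\Omega}=0$ only propagates the wrong sign. No increasing reparametrization $f(d)$ helps either, since $D f(d)=f'(d)\,Dd$. Your fallback $d/|D(u-t)|$ addresses $Dw\neq 0$, not this. Note that the paper's last paragraph has the same mismatch: it derives $\pm Dw\cdot\nu_{\partial\Omega}\ge 0$ on $\{\pm w\ge 0\}$ and then invokes (iii). So this step cannot be closed by appeal to the paper.

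The repair is easy, because Theorem \ref{thm:hwcpam15} only needs some smooth defining function for $M_t$ on $Z$, not the distance itself. Take $\tilde d=e^{K\rho}d$, where $\rho=\mathrm{dist}(\cdot,\partial\Omega)$ near $\partial\Omega$, cut off to a constant further inside. Then:
\begin{itemize}
\item $\tilde d$ has the same zero set and signs as $d$, and $D\tilde d=e^{K\rho}\nu_{M_t}$ on $M_t$.
\item $\Phi[\tilde d]=0$ on $M_t$, in the notation of the proof of Theorem \ref{thm:hwcpam15}, so $|\Phi[\tilde d]|\le\alpha|\tilde d|$ still holds.
\item On $\partial\Omega$, using $D\rho=-\nu_{\partial\Omega}$, one gets $D\tilde d\cdot\nu_{\partial\Omega}=e^{K\rho}(Dd\cdot\nu_{\partial\Omega}-Kd)$.
\end{itemize}
By orthogonality, $Dd\cdot\nu_{\partial\Omega}=0$ on $\partial M_t$, where $d|_{\partial\Omega}$ has unit tangential gradient. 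Hence $Dd\cdot\nu_{\partial\Omega}=h\,d$ on $\partial\Omega\cap\bar Z$, with $h$ smooth and bounded uniformly for $t$ near $T$ ($h\equiv 1$ in the disk example). Choose $K\ge\sup h$, and then take $Z$ thinner than $1/(2K)$. This gives (iii) with $D\tilde d\ne 0$, and the glued function $\phi f+(1-\phi)\tilde d$ now has the right sign on $\partial\Omega$.

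A smaller omission: the hypothesis only gives backward neighbourhoods ($t_1<t_2\le T$). Before defining arrival times on a window around $T$, you need the forward extension, which the paper obtains from transversality of $M_t$ to $\partial(P\cup N)$ plus avoidance. Your restart at $T-\tau$ is a sensible touch, since it avoids having to know $M_T=F_T(M)$.
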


\begin{proof}
We will adapt the proof of \cite[Theorem 3.1]{HershkovitsWhite} to our setting. Denote by $S_t$ the set of all singular points of the outer flow at time $t$.  By assumption all $x\in S_T$ have a mean-convex or mean-concave neighborhood of some positive radius $\delta_x>0$. By compactness, we can cover $S_T$ by finitely many balls $B(x_i,\delta_{x_i}/5)$ with $x_i\in S_T$.
Choose an open subset $W\subset\RR^{n+1}$ with smooth boundary, with
\begin{equation}
\bigcup_i B\left(x_i, \delta_{x_i}/ 4\right)\subseteq W \subseteq \bigcup_i B\left(x_i, \delta_{x_i} / 3\right),
\end{equation}
such that $\partial W$ is transversal to $M_T$ and $\partial \Omega$. Let $P$ and $N$ be the union of components of $W$ that contain points $x_i$ of mean-convex type and mean-concave type, respectively. Note that $P$ and $N$ are disjoint and that setting $\delta=\min_i \delta_{x_i}$ for $T-\delta^2 < t_1<t_2 \leq T$ we have
\begin{equation}\label{inclusions_backw}
K_{t_2} \cap \overline{P} \subseteq K_{t_1}\setminus M_{t_1}\quad\mathrm{and}\quad K_{t_1} \cap \overline{N} \subseteq K_{t_2}\setminus M_{t_2}.
\end{equation}
Since $P$ and $N$ are open, and the space-time singular set is closed, we can find $\eps<\delta^2$, such that
\begin{equation}\label{sing_cont_in_pn}
S_{T,\eps}:=\bigcup_{t\in [T-\eps,T+\eps]} S_t \subset P \cup N.
\end{equation}
Moreover, by decreasing $\eps$ we can ensure that $M_t$ intersects $\partial(P\cup N)$ transversely with nonvanishing mean curvature for all $t\in [T-\eps,T+\eps]$. In particular, this yields that for $T-\eps \leq t_1<t_2 \leq T+\eps$ we have
\begin{equation}\label{boundary_incl}
K_{t_2} \cap \partial P \subseteq K_{t_1}\setminus M_{t_1}\quad\mathrm{and}\quad K_{t_1} \cap \partial N \subseteq K_{t_2}\setminus M_{t_2}.
\end{equation}
Let us show that this implies that \eqref{inclusions_backw} also holds forwards in time:\footnote{This step can be omitted by assuming a priori that the mean-convex neighborhoods are two-sided, as provided by the mean-convex neighborhood theorem.}

\begin{claim}[inclusion]
For $T-\eps \leq t_1<t_2 \leq T+\eps$ we have
\begin{equation}\label{inclusion}
K_{t_2} \cap \overline{P} \subseteq K_{t_1}\setminus M_{t_1}\quad\mathrm{and}\quad K_{t_1} \cap \overline{N} \subseteq K_{t_2}\setminus M_{t_2}.
\end{equation}
\end{claim}

\begin{proof}Recall that  $\{M_t\}$ is a free boundary subsolution in $\bar{\Omega}$ by \cite[Proposition 5.2]{Bao}, and that $K_t=F_t(K)$ is a solution by definition. In particular, both are free boundary subsolutions in $P$ and $N$. Now, in light of \eqref{inclusions_backw}, it suffices to suppose towards a contradiction that there is a first time $t_2 \in (T,T+\eps)$, where \eqref{inclusion} fails for some $t_1\in[T,t_2)$. Then, by \eqref{boundary_incl}  we infer that $\{K_{t\pm(t_2-t_1)}\}_{t\geq T-\eps}$ and $\{M_t\}_{t\geq T-\eps}$ touch each other at a nonempty compact subset of $P$ or $N$ at time $t=t_1$ or $t=t_2$, respectively. This contradicts the fact that free boundary subsolutions avoid each other by \cite{GigaSato}.
\end{proof}

Continuing the proof of the theorem, let
\begin{equation}
X=\bigcup_{t \in(T-\eps,T+ \eps)} M_t \cap(P\cup N)\cap\bar{\Omega},
\end{equation}
and consider the arrival time $u: X \rightarrow(T-\eps,T+ \eps)$ defined by
\begin{equation}
u(x)=\textrm {the unique $t\in(T-\eps,T+\eps)$ such that }  x\in M_t.
\end{equation}
Moreover, we define a function $f:X\times \RR_{+}\to \RR$ by
\begin{equation}\label{equ:defoff}
f(x, t)= \begin{cases}u(x)-t & \text { if } x \in P, \\ t-u(x) & \text { if } x \in N.\end{cases}
\end{equation}
Furthermore, let $d:\bar{\Omega}\times \RR_+\to \RR$ be the signed distance to $M_t$, namely
\begin{equation}\label{equ:defofd}
d(x, t)= \begin{cases}\operatorname{dist}(x, M_t) & \text { if } x \in K_t, \\ -\operatorname{dist}(x, M_t) & \text { if } x \notin K_t .\end{cases}
\end{equation}
Now, remembering \eqref{sing_cont_in_pn} we choose an open set $U\subset\RR^{n+1}$, such that
\begin{equation}
S_{T,\eps}\subset U\Subset P\cup N.
\end{equation}
Lemma \ref{lem:fbcut} gives us a smooth function $\phi: \mathbb{R}^{n+1} \rightarrow[0,1]$, such that
\begin{equation}
\phi|_{\bar{U}} \equiv 1,\quad \spt\phi\subset P\cup N,\quad \textrm{and}\quad D \phi\cdot \nu_{\partial \Omega}=0 \textrm{ on } \partial \Omega. 
\end{equation}
Define a function $w:\left(X \cup (\phi^{-1}(0)\cap\bar{\Omega})\right) \times[T,T+ \eps) \rightarrow \mathbb{R}$ by
\begin{equation}
w(x, t)=\phi(x) f(x, t)+(1-\phi(x)) d(x, t).
\end{equation}

\bigskip

Note that on $\phi^{-1}((0,1))$ and for every $t\in[T,T+\eps)$ the zero sets (respectively negative/positive sets) of $w$, $d$ and $f$ coincide. In particular, we have
\begin{equation}
w(x,t)=0 \,\, \Leftrightarrow \,\, x\in M_t.
\end{equation}
Also recall that the outer free boundary flow $\{M_t\subset\bar{\Omega}\}$ is a subsolution of the free boundary flow, as has been shown in \cite[Proposition 5.2]{Bao}. Moreover, observe that $Dd=\nu_{M_t}$ on the smooth part of $M_t$, and $|Du|=1/|H|$ on the smooth part of $M_t\cap (P\cup N)$. This yields
\begin{equation}
Dw|_{(M_t\setminus S_t)\times\{t\}}=\left((1-\phi)+\frac{\phi}{|H|}\right)\nu_{M_t}.
\end{equation}

Now, let $Z$ be an $\eps$-neighborhood of $M_T \setminus U$, where $\eps$ is small enough, such that $\bar{Z}$ is disjoint from $S_{T,\eps}$ and $w(\cdot, T)$ is smooth with nonvanishing gradient on $\bar{Z}$. 
In fact, choosing $\tau \in (0, \eps)$ sufficiently small, $w$ is smooth with nonvanishing gradient on $\bar{Z}\times [T,T+\tau]$, 
and we have
\begin{equation}
\bigcup_{t \in[T,T+ \tau]} M_t \setminus U \subseteq Z.
\end{equation}
Furthermore, let $Y\subseteq U$ be an open set, such that
\begin{equation}
Y\cap \bar{\Omega}=\bigcup_{t \in(T-\eps,T+ \eps)} M_t \cap U.
\end{equation}
Since $Y\subseteq U$, we have $w\equiv f$ on $Y$, and thus considering (\ref{equ:defoff}) we see that $\{ \pm w(\cdot,t) \geq c\}_{t\in [T,T+\tau]}\cap Y$ are free boundary subsolutions in $Y$.

Finally, since $D\phi \cdot \nu_{\partial \Omega}=0$ on $\partial \Omega$ by our choice of cutoff function, on $Z\cap \partial \Omega$ we get
\begin{equation}
Dw\cdot \nu_{\partial \Omega}=(1-\phi)Dd\cdot \nu_{\partial \Omega},
\end{equation}
where we also took into account that $Df\cdot \nu_{\partial \Omega}=0$ by the free boundary condition on the smooth part.
Moreover, by the convexity of $\Omega$, we have $\pm D d \cdot \nu_{\partial \Omega}\ge 0$ when $\pm d\geq 0$. This shows that $\pm Dw\cdot \nu_{\partial \Omega}\geq 0$ when $\pm w\geq 0$.
We can thus apply Theorem \ref{thm:hwcpam15} to conclude that $\{M_t\}_{t\in [T,T+\tau]}$ is the free boundary level set flow in $\bar{\Omega}$ starting from $M_T$. As the exact same argument would have applied for $M^{\prime}_t$, this proves all our three flows agree until time $T+\tau$, and thus that $T_{\text {disc }}\geq T+\tau$.
\end{proof}

\bigskip

\bibliography{BaoHaslhofer_fb_cyl}

\bibliographystyle{abbrv}

\end{document}